\newtheorem{theorem}{Theorem}[section]
\newtheorem{lemma}[theorem]{Lemma}
\newtheorem{definition}[theorem]{Definition}
\newtheorem{proposition}[theorem]{Proposition}
\newtheorem{remark}[theorem]{Remark}
\DeclareMathOperator{\ad}{ad}
\DeclareMathOperator{\inter}{int}
\DeclareMathOperator{\spann}{span}
\DeclareMathOperator{\ri}{ri}
\DeclareMathOperator{\aff}{aff}
\newcommand{\sfe}{S_{F_e}}
\begin{document}

\title[Extremals on Lie groups]{Extremals on Lie groups with asymmetric polyhedral Finsler structures}

\author[J. B. Prudencio and R. Fukuoka]{J. B. Prudencio and R. Fukuoka \\ \\ \scriptsize{Department of Mathematics, State University of Maring\'a, 87020-900, Maring\'a, PR, Brazil}}

\address{Emails: jbuzattop@gmail.com; rfukuoka@uem.br}

\date{\today}

\begin{abstract}
In this work we study extremals on  Lie groups $G$ endowed with a left invariant polyhedral Finsler structure. We use the Pontryagin's Maximal Principle (PMP) to find curves on the cotangent bundle of the group, such that its projections on $G$ are extremals. 
	Let $\mathfrak g$ and $\mathfrak g^\ast$ be the Lie algebra of $G$ and its dual space respectively.
	We represent this problem as a control system $\mathfrak a^\prime(t)= -\ad^\ast(u(t))(\mathfrak a(t))$ of Euler-Arnold type equation, where $u(t)$ is a measurable control in the unit sphere of $\mathfrak g$ and $\mathfrak a(t)$ is an absolutely continuous curve in $\mathfrak g^\ast$.
	A solution $(u(t), \mathfrak a(t))$ of this control system is a Pontryagin extremal and $\mathfrak a(t)$ is its vertical part.
	In this work we show that for a fixed vertical part of the Pontryagin extremal $\mathfrak a(t)$, the uniqueness of $u(t)$ such that $(u(t),\mathfrak a(t))$ is a Pontryagin extremal can be studied through  an asymptotic curvature of $\mathfrak a(t)$.
\end{abstract}

\keywords{Lie groups, polyhedral Finsler structures, asymptotic curvature, Pontryagin extremals, uniqueness.}

\subjclass[2020]{49N99, 53B20, 53B99, 53D25}

\maketitle

\section{Introduction}

Let $M$ be a differentiable manifold, $T_xM$ the tangent space of $M$ at $x$ and $T^\ast_xM$ the cotangent space of $M$ at $x$.
Let $TM = \{(x,y); \ x\in M, \ y\in T_xM\}$ and $T^\ast M = \{(x,\xi); \ x \in M, \ \xi \in T^\ast_xM)\}$ be the tangent and cotangent bundles of $M$ respectively.

A $C^0$-Finsler structure on $M$ is a continuous function $F:TM\rightarrow\mathbb{R}$ such that $F(x,\cdot):T_xM\rightarrow\mathbb{R}$ is an asymmetric norm for every $x\in M$ (see Definition \ref{definition-asymmetric-norm}).
Polyhedral Finsler structures (or $p$-Finsler structures) are $C^0$-Finsler structures such that its restriction to tangent spaces are asymmetric norms which closed unit balls are polyhedra with the origin in its interior.

We use the term Finsler structures for the smooth case (see \cite{BaoChernShen}). 
We can see, even in simple examples, that the behavior of geodesics observed in Riemannian and Finsler manifolds are not satisfied in $C^0$-Finsler manifolds. 
For instance, geodesics don't need to be smooth and given $p$ in $M$ and $v\in T_pM$, we can have multiple geodesics $\gamma:(-\varepsilon, \varepsilon) \rightarrow M$ satisfying $\gamma(0)=p$ and $\gamma^\prime (0) = v$ or else no geodesics at all (see \cite{Fukuoka-large-family}, \cite{Gribanova}).

Differential calculus is one of the main techniques to study Finsler manifolds, but it can not be applied directly on $C^0$-Finsler manifolds. 
We need other techniques as we explain in sequel.

Metric geometry is one of these techniques. It can be used, for instance, to prove the local existence of minimizing curves that connect two points (see \cite{Burago, Mennucci}).
It can also be used to calculate explicitly geodesics for some specific $C^0$-Finsler manifolds (see \cite{Fukuoka-large-family}).

Another approach that can be useful for the study of $C^0$-Finsler manifolds $(M,F)$ is to approximate $F$ by a one-parameter family of Finsler structures $F_\varepsilon$.
In \cite{Fukuoka-Setti}, the authors proved that we can approximate a $C^0$-Finsler structure $F$ by a one-parameter family  $F_\varepsilon$ of Finsler structures such that if $F$ is a Finsler structure, then the various connections of the Finsler geometry and the flag curvature of $F_\varepsilon$ converge uniformly in compact subsets for the respective objects of $F$ when $\varepsilon$ tends to zero.
The technique used there was convolution with mollifiers.

We can assume that a $C^0$-Finsler structure has some kind of horizontal differentiability and we say that these structures are of Pontryagin type (See Item 3 of Definition \ref{definicao Pontryagin type}).
What is gained with this additional condition is that the Pontryagin's Maximum Principle (PMP) of the optimal control theory can be applied and the geodesic field of Finsler Geometry (and consequently, of Riemannian Geometry) can be generalized: in \cite{Fukuoka-Rodrigues}, the authors define the extended geodesic field $\mathcal E$ of a Pontryagin type $C^0$-Finsler manifold, which is a multivalued mapping that associates each $(x,\xi) \in TM^\ast \backslash 0$ to a subset of $T_{(x,\xi)}(T^\ast M\backslash 0)$.
The integral curves $(x(t),\xi(t))$ of the differential inclusion $(x^\prime(t),\xi^\prime(t)) \in \mathcal E(x(t),\xi(t))$ are the Pontryagin extremals.
The projection $x(t)$ of the Pontryagin extremals $(x(t),\xi(t))$ are extremals of $(M,F)$ and minimizing curves parametrized by arclength on the domain of definition of the control vector fields are always extremals. 

The use of the PMP for the study of extremals in $C^0$-Finsler manifolds was available since its creation by Pontryagin and his students \cite{Pontryagin}, because it deals with variational problems on non-differentiable length structures.
But in practice, this type of approach took a long time to become popular.
One of the first works that proposes the use of PMP for a wide class of geometric problems is \cite{Agrachev}.

Now we present some works in the literature that are related to the subject of this paper.
This list is not exhaustive and some relevant but not directly related subjects (such as sub-Riemannian geometry) were left out.
For those interested in this topic, see \cite{Agrachev-Barilari-Boscain}.

The PMP is well suited for Lie groups $G$ endowed with left invariant $C^0$-Finsler structures because they are of Pontryagin type.
Actually, a more general result holds: Let $M$ be a differentiable manifold and $G$ a transitive group of transformations over $M$.
Suppose that a $C^0$-Finsler structure $F$ on $M$ is $G$-invariant.
Then $F$ is of Pontryagin type (see \cite{Fukuoka-Rodrigues}).
In this work, we denote the Lie algebra of $G$ by $\mathfrak g$.

Let $M$ be a differentiable manifold endowed with a completely non-holonomic distribution $\mathcal D$ with constant rank $d$.
A $C^0$-sub-Finsler structure $F$ is a correspondence such that every $x \in M$ is associated to an asymmetric norm defined on $\mathcal D(x)$ in a continuous way, that is, if $X$ is a continuous horizontal vector field on $M$, then $F(X): M \rightarrow \mathbb R$ is continuous.
In a connected Lie group endowed with a left invariant distribution, the left invariant $C^0$-sub-Finsler structures can be studied using the PMP.

In \cite{Gribanova}, the author classify all minimizing paths on the $2$-dimensional non-abelian, simply connected Lie group endowed with a left invariant symmetric $C^0$-Finsler structure $F$ (the restriction of $F$ to each tangent space is a norm).
She uses the PMP and solves explicitly the resulting equations.

In \cite{Ardentov-Donne-Sachkov}, the authors study left invariant sub-Finsler structures of rank 2 in Cartan groups.
They choose two left invariant vector fields $X_1$ and $X_2$ on $G$ that generate the distribution and consider the maximum norm with respect to the frame given by the basis $\{X_1, X_2\}$.
They demonstrate that extremals of singular type are minimizers and extremals of type "bang-bang" are geodesics.

In \cite{Ardentov-Loku}, the authors study sub-Finsler structures of rank 2 in unimodular Lie groups of dimension 3.
The technique used here is convex trigonometry and they also use the control system defined by two left invariant vector fields that generate the distribution.
They study several cases of $C^0$-sub-Finsler structures, for example, with the unit ball of $F$ being polyhedral, strictly convex, $L^p$ norm, etc.

Let $G$ be a $2$-step Carnot group, that is, a Lie group such that the Lie algebra can be decomposed as $\mathfrak g = V_1 \oplus V_2$, with $[V_1,V_1]=V_2$ and $[V_1,V_2]=0$.
Consider a left invariant $C^0$-sub-Finsler structure $F$ defined on the distribution corresponding to $V_1$.
In \cite{Hakavuori}, the author proves that if $F$ is strictly convex (the restriction of $F$ to each tangent space is strictly convex), then isometries from $\mathbb R $ to $G$ are affine applications, that is, they are compositions of group homomorphism with left translations.

Many recent works use PMP on Lie groups with left invariant $C^0$-sub-Finsler structures of rank 2.
Among them we can cite \cite{ Ardentov-Loku, Bere-Zubareva, Loku, Sachkov}.
In these works, the authors calculate  extremals explicitly and study their properties.

Now we present this work.

In Section \ref{preliminaries} we fix notations and present the prerequisites that are necessary for the development of this work.

Let $G$ be a Lie group endowed with a left invariant $C^0$-Finsler structure $F$.
We consider the control system given by the left invariant unit vector fields.
The controls are measurable functions $u(t)$ defined on the unit sphere $S_{F_e}$ of $(\mathfrak g, F_e)$, where $F_e$ is the restriction of $F$ to $\mathfrak g$.
In Section \ref{The Extended Geodesic Field of a Lie Group}, we obtain the Lie algebra equivalent of the differentiable inclusion $(x^\prime(t),\xi^\prime(t)) \in \mathcal E(x(t),\xi(t)))$, which is a Euler-Arnold type equation $\mathfrak{a}'(t)=-\textrm{ad}^*(u(t))(\mathfrak{a}(t))$, where $\mathfrak a(t)$ is an absolutely continuous curve on the dual space $\mathfrak g^\ast$ of $\mathfrak g$ and the measurable control $u(t)$ maximizes $\mathfrak a(t)$ on $S_{F_e}$ (for Euler-Arnold equation, see \cite{Arnold-Khesin}).
A solution $(u(t),\mathfrak a(t))$ of $\mathfrak{a}'(t)=-\textrm{ad}^*(u(t))(\mathfrak{a}(t))$ is called a Pontryagin extremal of $(G,F)$ and $\mathfrak a(t)$ is its vertical part.

In Section \ref{secao curvatura}, we introduce the limit flag curvature for Lie groups endowed with a left invariant $p$-Finsler structure. 
Let $\mathfrak{a}\in \mathfrak{g}^\ast\backslash \{0\}$ be a functional and $v_1$ be a point that maximizes $\mathfrak{a}$ in $S_{F_e}$. 
Notice that if $L$ is a face of $S_{F_e}$ containing $v_1$, then $L \subset \{v_1\} + \ker \mathfrak{a}$.
We are interested in defining a curvature of a plane $\spann \{v_1,v_2\}$ with respect to $\mathfrak{a}$, where $v_2$ is chosen in $\ker \mathfrak{a}$ for convenience. 
We consider a family of left invariant Riemannian metrics $\{g_k\}_{k \in \mathbb N}$ on $G$ such that the respective sequence of unit spheres $S_{g_k}\subset (\mathfrak g,g_k\vert_{\mathfrak g})$ centered at the origin contains $v_1$ and converges locally to $\{v_1\}+\ker \mathfrak a$ in a neighborhood of $v_1$ when $k \rightarrow \infty$.
The sequence of sectional curvatures of $\spann \{v_1,v_2\} \subset (\mathfrak g,g_k\vert_{g})$ is a polynomial in $k$ and we denote the coefficient of highest degree of this polynomial by $\mathcal K_{\mathcal{B}}(\mathfrak{a})$ (the notation $\mathcal B$ and technical details are explained in Section \ref{secao curvatura}). 

Let $F_\ast$ be the dual norm of $F_{e}$ on $\mathfrak g^\ast$.
$F_\ast$ is also a polyhedral asymmetric norm.
Due to the PMP, the vertical part of a Pontryagin extremal lies on a sphere $S_{F_\ast}[0,r] \subset \mathfrak g^\ast$. 
In Section \ref{uniqueness} we study Lie groups endowed with a left invariant $p$-Finsler structure.
We prove that the equation $\mathcal K_{\mathcal B}(\mathfrak{a})=0 \; (\neq 0)$ depends only on $\mathfrak a$ and $v_2$ and we denote it by $\mathcal{K}(\mathfrak{a},v_2)=0 \; (\neq0)$ (see Proposition \ref{proposicao k fresco e algebrico} and Remark \ref{observacao depende so de v2}).
If $G$ is a three-dimensional Lie group, we prove that $\mathcal{K}(\mathfrak{a},v_2)$ doesn't depend on $v_2$ (see Proposition \ref{proposition ka zero kera subalgebra} and Remark \ref{caso dimensao 3 k fresco nao depende v2}).
Theorems \ref{multiplas solucoes 1}, \ref{multiplas solucoes 2}, \ref{teorema principal} and \ref{teorema principal 2} are the main results of this work and they relate two properties of the vertical part $\mathfrak{a}(t)$ of a Pontryagin extremal:  $\mathfrak{a}(t)$ admitting infintely many controls $\tilde u(t)$ such that $(\tilde u(t), \mathfrak{a}(t))$ is a Pontryagin extremal and the set $\mathcal{I} = \{t\in I; \text{ there exist }v_2 \in \ker \mathfrak{a}(t)\backslash \{0\}\text{ such that } \mathcal{K}(\mathfrak{a}(t),v_2)=0\}$ having positive measure.

Finally Section \ref{final-remarks} is devoted to final remarks and suggestions for future works.

In Finsler (and Riemannian) Geometry, geodesics and curvature are related by Jacobi fields.
The main contribution of this work is to show that an asymptotic curvature can be defined on $\mathfrak g^\ast$ and that it can be related with the behaviour of extremals in $p$-Finsler manifolds.

This work was done during the Ph.D. of the first author under the supervision of the second author at State University of Maringá, Brazil.

\section{Preliminaries}
\label{preliminaries}

In this section we fix some notations and present some definitions that are used in this work. 
For asymmetric norms, compare with \cite{Cobzas}. 
For convex polyhedral sets, see \cite{Grunbaum}.
In Theorem \ref{existencia Finsler poliedral suave horizontal} we prove the existence of $p$-Finsler structures on a smooth manifold. 
It is an immediate consequence of the theory presented here.  
Meanwhile, we also present some subjects of $C^0$-Finsler geometry, convex geometry and duality between asymmetric norms that will be useful afterward.

Let $V$ be a finite dimensional real vector space and $V^\ast$ its dual space.
We consider the usual topology on $V$ and $V^\ast$.
If $X$ is a subset of a topological space, its closure, interior and boundary will be denoted by $\bar X$, $\inter X$ and $\partial X$ respectively.

\begin{definition}
	\label{definition-asymmetric-norm}
	An {\em asymmetric norm} on $V$ is a nonnegative function $F:V \rightarrow \mathbb{R}$ such that
	\begin{enumerate}
		\item $F(y)=0$ iff $y=0$;
		\item $F(\alpha y)=\alpha F(y)$ for every $\alpha > 0$ and $y\in V$ (positive homogeneity);
		\item $F(y + w) \leq F(y) + F(w)$ for every $y,w\in V$.
	\end{enumerate}
	The closed ball centered at $y \in (V,F)$ with radius $r\geq 0$ is defined as
	\[
	B_F[y,r]=\{w\in V;F(w-y) \leq r\}
	\]
	and analogous notations hold for open balls $B_F(y,r)$ and spheres $S_F[y,r]$.
	The spheres centered at the origin with radius $1$ will be simply denoted by $S_F$.
\end{definition}

In particular, a norm is an asymmetric norm.

The proof of the next proposition is relatively simple, so we omit it.
\begin{proposition}
	\label{poliedral-bilipschitz}
	Let $F_1$ and $F_2$ be asymmetric norms on a finite dimensional vector space $V$ over $\mathbb R$.
	Then there exist constants $c,C>0$ such that 
	\[
	c.F_1(y) \leq F_2(y) \leq C.F_1(y)
	\]
	for every $y \in V$. Moreover, if $F$ is an asymmetric norm, then $F$ is continuous.
\end{proposition}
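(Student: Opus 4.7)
The plan is to first reduce everything to comparison with a fixed background norm, namely the $\ell^1$ norm on $V$ associated with an arbitrary basis $\{e_1,\ldots,e_n\}$. The strategy is to get continuity before trying to extract equivalence constants, since, unlike the symmetric case, the asymmetry forbids a naive ``minimum on the compact sphere'' argument at the very start.

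First I would establish an upper bound $F(y)\leq C\|y\|_1$ for any asymmetric norm $F$. Given $y=\sum_i a_i e_i$, write $a_i = a_i^{+} - a_i^{-}$ with $a_i^{+}, a_i^{-}\geq 0$ and $a_i^{+}a_i^{-}=0$, so that $|a_i|=a_i^{+}+a_i^{-}$. Applying subadditivity and positive homogeneity yields
\[
F(y) \;\leq\; \sum_i a_i^{+}F(e_i) + \sum_i a_i^{-}F(-e_i) \;\leq\; C\|y\|_1,
\]
with $C = \max_i\{F(e_i),F(-e_i)\}$. Next I would deduce Lipschitz continuity of $F$: the triangle inequality gives $F(y)-F(w)\leq F(y-w)$ and $F(w)-F(y)\leq F(w-y)$, hence
\[
|F(y)-F(w)| \;\leq\; \max\{F(y-w),F(w-y)\} \;\leq\; C\|y-w\|_1,
\]
which proves the second assertion of the proposition.

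Now that $F$ is continuous, the unit sphere $\{y:\|y\|_1 = 1\}$ is compact and $F$ is strictly positive on it (since $F(y)=0$ iff $y=0$), so it attains a positive minimum $c>0$ there. By positive homogeneity this gives the lower bound $F(y)\geq c\|y\|_1$ for all $y\in V$. Applying these two-sided bounds to both $F_1$ and $F_2$ yields constants $c_1,C_1,c_2,C_2>0$ with $c_i\|y\|_1\leq F_i(y)\leq C_i\|y\|_1$, from which $(c_2/C_1)F_1(y)\leq F_2(y)\leq (C_2/c_1)F_1(y)$ follows, proving the first statement with $c=c_2/C_1$ and $C=C_2/c_1$.

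The only point requiring genuine care is not to invoke compactness arguments before continuity has been secured; because $F$ is merely positively (not absolutely) homogeneous, the standard symmetric-norm shortcut is unavailable, and one must produce the Lipschitz estimate via the basis expansion first. Everything else is routine bookkeeping.
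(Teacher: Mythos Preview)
Your argument is correct. The paper itself omits the proof of this proposition, stating only that it ``is relatively simple, so we omit it''; consequently there is no approach to compare against. Your route---first bounding $F$ above by a multiple of the $\ell^1$ norm via the decomposition $a_i = a_i^{+}-a_i^{-}$ (which correctly handles the lack of absolute homogeneity), then deducing Lipschitz continuity, and finally extracting the lower bound by a compactness argument on the $\ell^1$ unit sphere---is the standard and clean way to fill this gap.
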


\begin{definition}
\label{definicao C0 Finsler structure}
A $C^0$-Finsler structure $F$ on a differentiable manifold $M$ is a continuous function $F: TM \rightarrow \mathbb R$ such that its restriction to each tangent space is an asymmetric norm. 
A differentiable manifold endowed with a $C^0$-Finsler structure is a $C^0$-Finsler manifold.
\end{definition}

Now we present some fundamental theory of $C^0$-Finsler manifolds.

\begin{proposition}
\label{proposicao estruturas equivalentes}
Let $F_1$ and $F_2$ be two $C^0$-Finsler structures on $M$. 
Then every $p\in M$ admit a neighborhood $U$ and $c,C>0$ such that 
\begin{equation}
\label{equacao normas comparaveis em TM}
c.F_1(x,y) \leq F_2(x,y) \leq C.F_1(x,y)
\end{equation}
for every $(x,y) \in TU$. 
\end{proposition}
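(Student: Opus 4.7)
The plan is to reduce the question to Proposition \ref{poliedral-bilipschitz} combined with a compactness argument on the Euclidean unit sphere in a local trivialization of $TM$.

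First I would choose a coordinate chart $(W,\varphi)$ around $p$ that induces a trivialization $TW \cong \varphi(W)\times \mathbb R^n$. Via this identification, $F_1$ and $F_2$ become continuous functions $\tilde F_i: \varphi(W)\times \mathbb R^n\to\mathbb R$, and for each fixed $x\in \varphi(W)$ the map $v\mapsto \tilde F_i(x,v)$ is an asymmetric norm on $\mathbb R^n$. Let $S^{n-1}\subset \mathbb R^n$ denote the Euclidean unit sphere. By continuity together with Proposition \ref{poliedral-bilipschitz} (an asymmetric norm never vanishes on $V\setminus\{0\}$), the restrictions $\tilde F_i(\varphi(p),\cdot)\vert_{S^{n-1}}$ are strictly positive continuous functions on the compact set $S^{n-1}$; hence there are constants $0<m_i\leq M_i$ with $m_i\leq \tilde F_i(\varphi(p),v)\leq M_i$ for every $v\in S^{n-1}$.

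Next I would propagate these bounds to a neighborhood of $p$. Since $\tilde F_1,\tilde F_2$ are continuous on $\varphi(W)\times\mathbb R^n$ and $S^{n-1}$ is compact, a standard tube-lemma argument produces an open neighborhood $V\subset W$ of $p$ and constants $0<m_i'\leq M_i'$ such that
\begin{equation*}
m_i'\leq \tilde F_i(\varphi(x),v)\leq M_i' \qquad \text{for all }(x,v)\in V\times S^{n-1},\ i=1,2.
\end{equation*}
Set $C=M_2'/m_1'$ and $c=m_2'/M_1'$. For any $(x,y)\in TV$ with $y\neq 0$, positive homogeneity (item (2) of Definition \ref{definition-asymmetric-norm}) gives $\tilde F_i(\varphi(x),y)=\|y\|\,\tilde F_i(\varphi(x),y/\|y\|)$, where $\|\cdot\|$ is the Euclidean norm in the chosen trivialization. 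Dividing, the factor $\|y\|$ cancels and the bounds above yield $c\, F_1(x,y)\leq F_2(x,y)\leq C\, F_1(x,y)$. The case $y=0$ is trivial. Taking $U=V$ completes the argument.

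The main point that requires care is the tube-lemma step: one needs simultaneous uniform control of both $\tilde F_1$ from below and $\tilde F_2$ from above (and vice versa) on $V\times S^{n-1}$, which is the only place where the hypothesis of continuity of $F_i$ on the whole tangent bundle (rather than fiberwise) is used. Everything else is a routine homogeneity argument reducing the problem from $TV$ to $V\times S^{n-1}$.
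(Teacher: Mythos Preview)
Your proof is correct and follows essentially the same strategy as the paper: reduce by positive homogeneity to a compact sphere in each fiber, then use continuity and compactness to get uniform bounds near $p$. The only cosmetic difference is that the paper fixes an auxiliary Riemannian metric $g$, takes $U$ with compact closure, and directly computes $c$ and $C$ as the infimum and supremum of the continuous positive ratio $F_2/F_1$ over the $g$-unit sphere bundle $\mathcal{S}U$, whereas you work in a single chart, bound $\tilde F_1$ and $\tilde F_2$ separately on $V\times S^{n-1}$ via the tube lemma, and then combine.
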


\begin{proof}
Fix a Riemannian metric $g$ on $M$ and let $\mathcal{S}M$ be the sub-bundle of unit vectors of $TM$ with respect to $g$.
If we choose a neighborhood $U$ of $p$ with compact closure in $M$ and set
\[
C=\sup_{(x,y)\in \mathcal{S}U}\frac{F_2(x,y)}{F_1(x,y)} \;\;\;\text{ and }\;\;\; c = \inf_{(x,y)\in \mathcal{S}U}\frac{F_2(x,y)}{F_1(x,y)},
\]
then (\ref{equacao normas comparaveis em TM}) holds.
\end{proof}

\begin{definition}
Let $\gamma: [a,b] \rightarrow (M,F)$ be a path which is continuously differentiable by parts. 
The length of $\gamma$ with respect to $F$ is defined by
\[
\ell_F(\gamma) = \int_a^b F(\gamma(t),\gamma^\prime(t)).dt.
\]
We say that $\gamma$ is minimizing if $\ell_F(\gamma)\leq \ell_F(\eta)$ for every continuously differentiable by parts $\eta$ that connects $\gamma(a)$ and $\gamma(b)$. 
We say that $\gamma$ is a geodesic if $\gamma$ is locally minimizing, that is, if for every $t_0 \in [a,b]$, there exists a neighborhood $(t_0-\varepsilon,t_0+\varepsilon) \cap [a,b]$ of $t_0$ in $[a,b]$ such that $\gamma\vert_{I}$ is minimizing for every closed interval $I \subset (t_0-\varepsilon,t_0+\varepsilon) \cap [a,b]$.
\end{definition}

\begin{remark}
If $\tilde{\gamma}$ is the reverse curve of $\gamma$, then in general $\ell_F(\tilde{\gamma}) = \ell_F(\gamma)$ doesn't hold. 
Therefore $d_F: M \times M \rightarrow \mathbb{R}$ defined by 
\[
d_F(p,q) = \inf_{\gamma \in \mathcal{C}^1_{p,q}}\ell_F (\gamma),
\]
where $\mathcal{C}^1_{p,q}$ is the family of paths that are continuously differentiable by parts and connects $p$ and $q$, isn't a metric in general.
\end{remark}

\begin{definition}
\label{definicao bola aberta}
The open ball in $(M,F)$ with center $p$ and radius $r$ is defined by $B_F(p,r) = \{q \in M; d_F(p,q)< r\}$.
The closed ball $B_F[p,r]$ and the sphere $S_F[p,r]$ are defined analogously.
\end{definition}

\begin{remark}
\label{observacao metrica assimetrica}
If we choose a Riemannian metric $F_2=g$ in 
(\ref{equacao normas comparaveis em TM}) and consider that the family of open balls in a Riemannian manifold $(M,g)$ is a basis of the topology of the differentiable manifold $M$, then we can conclude that the family of open balls in $(M,F_1)$ is also a basis of the topology of $M$.
In particular, the connected components of $M$ coincides with the connected components of $(M,F)$.
In particular, $d_F(p,q) < \infty$ iff $p$ and $q$ lies in the same connected component of $M$.

Finally if $d_F$ is restricted to a connected component $\tilde M$ of $(M,F)$, then $d_F\vert_{\tilde{M}}$ is an {\em asymmetric metric}, that is, a mathematical object that satisfies every condition of a (finite) metric, except the symmetry.
\end{remark}

\begin{definition}
	Let $V$ be a finite dimensional vector space and $Z$ be a convex subset of $V$.
	The {\em affine hull} $\aff Z$ of $Z$ is the smallest affine subset of $V$ that contains $Z$.
	The {\em relative interior} $\ri Z$ of $Z$ is the interior of $Z$ considered as a subspace of $\aff Z$.  
\end{definition}

Now we return to the theory of polyhedral $C^0$-Finsler structures.
We begin with the theory on polyhedral asymmetric norms on finite dimensional real vector spaces $V$.
The definitions and results given here aren't in the most general setting.
We adapted the theory presented in \cite{Grunbaum} according to our necessities.

\begin{definition}
	A non-empty subset of $K\subset V$ is {\em polyhedral} if it is the intersection of a finite family of closed half spaces of $V$.
\end{definition}

\begin{definition}
	A polyhedral asymmetric norm $F:V \rightarrow \mathbb R$ on a vector space $V$ is an asymmetric norm such that its closed unit ball $B_F[0,1]$ is a polyhedral subset of $V$. 
\end{definition}

\begin{definition}
\label{definicao face}
A {\em supporting hyperplane} of a non-empty compact subset $A$ of $V$ is a hyperplane $H$ of $V$ such that $A$ is contained in one of the closed half spaces determined by $H$ and $A\cap H \neq \emptyset$. A subset $L$ of a polyhedral set $K\subset V$ is a face of $K$ if either $L=\emptyset$, $L=K$ or else if there exists a supporting hyperplane $H$ of $K$ such that $L=K\cap H$. A $0$-dimensional and a $1$-dimensional face of $K$ is also called a vertex and an edge of $K$, respectively.
\end{definition}

The polyhedral subsets we are interested in are closed balls $B_F[0;r]$ in $(V,F)$, which are bounded (compact) polyhedral subsets of $V$ with the origin in its interior.

\begin{definition}
A maximal proper face of a polyhedral subset $B$ of $V$ is a {\em facet} of $B$. 
\end{definition}

The next proposition gives a detailed information about the configuration of the faces of a compact polyhedral subset of $V$ with non-empty interior (see Sections 2.6 and 3.1 of \cite{Grunbaum}). 
In order to follow Proposition \ref{proposicao organizacao faces} from this reference, some remarks are useful.
\begin{itemize}
\item Polytope is equivalent to bounded polyhedral subset;
\item A subset of a polyhedral subset $B$ of $V$ is a poonem of $B$ iff it is a face of $B$; 
\end{itemize}

\begin{proposition}
\label{proposicao organizacao faces}
Let $V$ be an $n$-dimensional real vector space and $B$ be a bounded polyhedral subset of $V$ with non-empty interior. Then
\begin{itemize}
\item There exists the smallest 
family of closed half spaces $\{H_1^-, \ldots, H_l^- \}$ of $V$ such that $B=\cap_{i=1}^l H_i^-$. 
The affine hulls of the facets of $B$ are $H_i := \partial H_i^-$.
In particular, the dimension of the facets is $(n-1)$.
\item Each facet of a facet of $B$ is the intersection of two facets of $B$;
\item $\partial B$ is the union of facets of $K$;
\item Every non-empty proper face of $B$ is the the intersection of facets of $B$. 
\end{itemize}
\end{proposition}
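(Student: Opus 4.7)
The proof essentially translates standard results from convex polytope theory (cf.\ Sections 2.6 and 3.1 of \cite{Grunbaum}) to the setting at hand. The plan is to establish the first bullet, from which the remaining three follow by elementary face-lattice arguments. My approach is to start from an arbitrary finite half-space representation and prune it to a minimal one, then identify the surviving bounding hyperplanes with the affine hulls of facets.

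First, for the minimal family $\{H_1^-,\ldots,H_l^-\}$, I would take any representation $B=\bigcap_{i=1}^N H_i^-$ provided by the polyhedral assumption and successively discard any redundant half-space, i.e.\ any $H_i^-$ with $B=\bigcap_{j\neq i} H_j^-$. This terminates in a minimal family. To identify the $H_i=\partial H_i^-$ with affine hulls of facets, I would argue in two steps. (a) Each $B\cap H_i$ is a facet of dimension $n-1$: by minimality, there exists $x_0\in B$ such that $x_0\in \inter H_j^-$ for every $j\neq i$ and $x_0\in H_i$ (otherwise $H_i^-$ could be dropped); a small $(n-1)$-ball inside $H_i$ around $x_0$ still lies in $B$, so $\aff(B\cap H_i)=H_i$, and $H_i$ supports $B$. (b) Every facet $L=B\cap H$ equals some $B\cap H_i$: since $\dim L=n-1$, the affine hull of $L$ is a hyperplane $\widetilde H$, and $L\subset \partial B$ forces every point of $\ri L$ to lie on $\partial H_i^-=H_i$ for at least one $i$; a connectedness/dimension argument on $\ri L$ then pins a single $i$ with $\widetilde H=H_i$. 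Uniqueness of the family is immediate since the facets are intrinsic to $B$.

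For the third bullet, $\partial B$ is the union of facets: any $x\in \partial B$ belongs to $H_i^-$ for every $i$ but must satisfy $x\in H_i$ for at least one $i$ (else $x\in\inter B$), giving $x\in B\cap H_i$; the reverse inclusion is clear since each $B\cap H_i$ lies in $\partial B$. For the fourth bullet, given a non-empty proper face $L=B\cap H$ with supporting hyperplane $H$, I would set $I=\{i:L\subset H_i\}$ (which is non-empty because $\ri L\subset \partial B$ by the third bullet, so some $H_i$ contains a relatively open subset of $L$, hence all of $L$). Then I claim $L=\bigcap_{i\in I}(B\cap H_i)$: the inclusion $\subset$ is tautological, and for $\supset$ a point in the right-hand side that was not in $L$ could be combined with a point of $\ri L$ to produce a segment in $\bigcap_{i\in I}H_i$ on which $H$ is not supporting, contradicting that $H$ cuts out $L$. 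The second bullet is the special case where $L$ is itself a facet of a facet, and one checks $|I|=2$ by a dimension count: a facet of a facet has dimension $n-2$, which is the generic dimension of $H_i\cap H_j$.

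The main technical obstacle I expect is step (a) of the first bullet, namely extracting from minimality a point $x_0\in B\cap H_i$ in the relative interior of the slab bounded by the other half-spaces; this requires a careful separation argument together with the fact that $B$ has non-empty interior, which is precisely what prevents $B\cap H_i$ from degenerating to lower dimension. Once this is secured, the remaining arguments are combinatorial manipulations of the face lattice and do not require further convex-analytic input beyond what is already in \cite{Grunbaum}.
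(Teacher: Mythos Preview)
The paper does not give its own proof of this proposition: it is stated with a reference to Sections~2.6 and~3.1 of \cite{Grunbaum}, together with two terminological remarks (polytope $=$ bounded polyhedral set; poonem $=$ face) so the reader can match the statements there. So there is nothing to compare against; your sketch is essentially a reconstruction of the cited material rather than an alternative route.

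That said, two steps in your outline are looser than the rest and would need tightening if you wanted this to stand on its own. For the fourth bullet, the sentence ``a segment in $\bigcap_{i\in I}H_i$ on which $H$ is not supporting'' is not yet a contradiction: both endpoints lie in $B$, and $H$ still supports $B$. The actual argument extends the segment from $x\notin L$ through $y\in\ri L$ slightly \emph{past} $y$; one first checks that $y\in\inter H_j^-$ for every $j\notin I$ (this uses that $y\in\ri L$ and the definition of $I$), so the extended point $z$ stays in $B$, yet lies on the side of $H$ opposite to $x$, contradicting that $H$ supports $B$. For the second bullet, the bare dimension count ``$n-2$ is the generic dimension of $H_i\cap H_j$'' does not force $|I|=2$: three or more facet hyperplanes can meet in an $(n-2)$-flat. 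One needs the sharper fact (this is where \cite{Grunbaum} does some work) that an $(n-2)$-face lies in exactly two facets, which uses the local product structure of $\partial B$ near a relative interior point of that face, not just affine dimension.
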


Now we proceed studying polyhedral asymmetric norms.

\begin{remark}
	\label{funcional e semi espaco}
	The map $\alpha\in V^*\backslash \{0\}\mapsto \alpha^{-1}(-\infty,1]$ is a bijection between $V^\ast \backslash \{0\}$ and the family of closed half spaces that contains the origin in its interior.
\end{remark}

\begin{proposition}
	\label{F em termos de funcionais}
	Let $F$ be a polyhedral asymmetric norm. Suppose that
	$$	B_F[0,1]=\displaystyle{\bigcap_{i=1}^m H_i^-,}$$
	where $H_i^-$ are closed half spaces.
	Then 
	\begin{equation}
		\label{F em termos de alpha}
		F=\max\{\alpha_1, \ldots, \alpha_m\},
	\end{equation}
	where $\alpha_i\in V^{\ast}\setminus\{0\}$ and $H_i^-=\alpha_i^{-1}(-\infty,1]$ for $i\in\{1, \ldots, m\}$.
\end{proposition}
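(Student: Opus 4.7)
The plan is to exhibit the right-hand side $G:=\max\{\alpha_1,\ldots,\alpha_m\}$ as a positively homogeneous continuous function of degree one whose sublevel set $\{G\le 1\}$ coincides with $B_F[0,1]$, and then invoke the standard Minkowski-functional identification to conclude $F=G$ pointwise.

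First I would apply Remark \ref{funcional e semi espaco}: since $0\in\inter B_F[0,1]\subset H_i^-$ for each $i$, every $H_i^-$ contains the origin in its interior, so there is a unique $\alpha_i\in V^*\setminus\{0\}$ with $H_i^-=\alpha_i^{-1}(-\infty,1]$. Setting $G:=\max\{\alpha_1,\ldots,\alpha_m\}$, continuity, $G(0)=0$, and positive homogeneity of degree one are immediate, and intersecting the half-spaces gives $\{y\in V:G(y)\le 1\}=\bigcap_{i=1}^m H_i^- = B_F[0,1] = \{y\in V:F(y)\le 1\}$. A short auxiliary check then shows $G(y)>0$ whenever $y\neq 0$: if $G(y)\le 0$, then $G(ty)=tG(y)\le 0\le 1$ for every $t>0$, forcing the entire ray $\mathbb{R}_+y$ into $B_F[0,1]$, contradicting the boundedness of $B_F[0,1]$ (a consequence of Proposition \ref{poliedral-bilipschitz} comparing $F$ to an Euclidean norm).

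To finish, I would use the following homogeneity argument. For any $y\neq 0$, the vector $y/F(y)$ lies in $B_F[0,1]=\{G\le 1\}$, so $G(y/F(y))\le 1$, which by positive homogeneity gives $G(y)\le F(y)$. Symmetrically, dividing by the positive quantity $G(y)$, the vector $y/G(y)$ lies in $\{G\le 1\}=\{F\le 1\}=B_F[0,1]$, yielding $F(y)\le G(y)$. Hence $F(y)=G(y)$ on $V\setminus\{0\}$, and the case $y=0$ is immediate. I do not anticipate any genuine obstacle; the only subtlety is the positivity check for $G$, which uses boundedness of the unit ball to rule out a ``recession'' direction and thereby legitimizes the division by $G(y)$ in the last step.
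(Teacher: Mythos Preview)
Your proof is correct. The approach, however, differs from the paper's in its final identification step. The paper reduces to the unit sphere $S_F$ by positive homogeneity and argues pointwise there: for $y\in S_F$ one has $\alpha_i(y)\le 1=F(y)$ because $S_F\subset H_i^-$, and conversely any $y\in S_F=\partial B_F[0,1]$ must lie on some boundary hyperplane $\partial H_i^-$, giving $\alpha_i(y)=1$ and hence $\max_j\alpha_j(y)\ge F(y)$. You instead identify the closed unit balls $\{G\le 1\}=\{F\le 1\}$ and invoke the Minkowski-functional principle that two positively homogeneous functions of degree one sharing the same sublevel set at level $1$ must coincide. Your route is the more abstract and reusable one, and it cleanly isolates the positivity of $G$ (via boundedness of $B_F[0,1]$) as the only nontrivial ingredient; the paper's route is slightly more concrete, exploiting directly that a boundary point of a finite intersection of closed half-spaces lies on one of the bounding hyperplanes, and thereby avoids the separate positivity check.
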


\begin{proof}
	
	From Proposition \ref{poliedral-bilipschitz}, $0\in \inter H_i^-$ for every $i\in\{1, \ldots, m\}$ and there exist a unique linear functional $\alpha_i:V \rightarrow \mathbb R$ such that $\alpha_i^{-1}(-\infty,1] = H_i^-$ due to Remark \ref{funcional e semi espaco}. Now notice that both sides of Equation (\ref{F em termos de alpha}) are positively homogeneous. 
	Therefore it is enough to prove it on $S_F$.
	First of all $\alpha_i(y) \leq F(y)$ for every $y\in S_F$ because $S_F \subset H_i^-$.
	In order to prove that $F(y) \leq \max\{\alpha_i, \ldots, \alpha_m\}$, let $y \in S_F$.
	Then $y \in H_i:=\partial H_i^-$ for some $i\in \{1, \ldots, m\}$ and we have that $F(y) = \alpha_i(y) = 1 \leq \max \{\alpha_1 (y), \ldots, \alpha_m(y)\}$. 
\end{proof}

The next proposition is the reciprocal of Proposition \ref{F em termos de funcionais}.

\begin{proposition}\label{alpha gera Finsler}
	Let $\{\alpha_1, \ldots, \alpha_m\} \subset V^\ast\backslash \{0\}$ be a family of functionals such that for every $y\in V\backslash \{0\}$, there exist $i\in \{1, \ldots, m\}$ satisfying $\alpha_i(y) >0$. Then $\max\{\alpha_1, \ldots, \alpha_m\}$ is a polyhedral asymmetric norm.
\end{proposition}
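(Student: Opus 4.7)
The plan is to verify directly the three axioms of an asymmetric norm for $F:=\max\{\alpha_1,\ldots,\alpha_m\}$ and then read off the polyhedral structure of its closed unit ball from the defining functionals.

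First I would define $F(y):=\max_{1\le i\le m}\alpha_i(y)$ and check nonnegativity together with the vanishing condition. We have $F(0)=0$ because every $\alpha_i$ is linear. Conversely, if $y\neq 0$, the standing hypothesis produces an index $i$ with $\alpha_i(y)>0$, hence $F(y)\ge\alpha_i(y)>0$. In particular $F\ge 0$ everywhere and condition (1) of Definition \ref{definition-asymmetric-norm} holds.

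Next I would establish the remaining two axioms using only the linearity of each $\alpha_i$. For positive homogeneity, for $\alpha>0$ one has $\alpha_i(\alpha y)=\alpha\,\alpha_i(y)$, so $F(\alpha y)=\max_i\alpha\,\alpha_i(y)=\alpha\max_i\alpha_i(y)=\alpha F(y)$. For the triangle inequality, for any fixed $i$ linearity gives $\alpha_i(y+w)=\alpha_i(y)+\alpha_i(w)\le F(y)+F(w)$, and taking the maximum on the left yields $F(y+w)\le F(y)+F(w)$. This shows $F$ is an asymmetric norm.

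Finally I would check that $B_F[0,1]$ is polyhedral. By the definition of $F$,
\[
B_F[0,1]=\{y\in V:\max_i\alpha_i(y)\le 1\}=\bigcap_{i=1}^m\alpha_i^{-1}(-\infty,1],
\]
which exhibits the closed unit ball as a finite intersection of closed half-spaces (each half-space is closed because $\alpha_i$ is continuous, and each contains the origin since $\alpha_i(0)=0<1$). This set is nonempty as it contains $0$, so it is polyhedral in the sense of the paper's definition, and therefore $F$ is a polyhedral asymmetric norm. I do not anticipate a genuine obstacle here; the only subtle point is that the hypothesis on the $\alpha_i$ is used exclusively to secure $F(y)>0$ for $y\neq 0$, while positive homogeneity, subadditivity and the polyhedral shape of the unit ball follow from linearity of the functionals alone.
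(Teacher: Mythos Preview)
Your proof is correct and follows essentially the same approach as the paper: direct verification of the three axioms of an asymmetric norm, with the triangle inequality handled by picking an index $j$ achieving the maximum at $y+w$ and using linearity of $\alpha_j$. You are in fact more thorough than the paper's own argument, which declares Items (1) and (2) ``straightforward'' and never explicitly writes down the identity $B_F[0,1]=\bigcap_{i=1}^m\alpha_i^{-1}(-\infty,1]$ that justifies the polyhedral claim; your final paragraph supplies exactly that missing detail.
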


\begin{proof}
	It is straightforward that Items (1) and (2) of Definition \ref{definition-asymmetric-norm} holds for $\max \{\alpha_1, \ldots, \alpha_m\}$.
	For the triangular inequality, if $y,w \in V$, then there exist $j\in \{1, \ldots, m\}$ such that
	\[
	\max_{i=1, \ldots, m}\alpha_i (y+w)=\alpha_j (y+w)
	\]
	and
	\[
	\max_{i=1, \ldots, m}\alpha_i (y+w) = \alpha_j(y) + \alpha_j(w) \leq \max_{i=1, \ldots, m}\alpha_i (y) + \max_{i=1, \ldots, m}\alpha_i (w),
	\]
	what settles the proposition.
\end{proof}

\begin{lemma}\label{Sum of polyhedral}
	Let $F_1$ and $F_2$ be polyhedral asymmetric norms and $a_1, a_2 > 0$. 
	Then $a_1F_1 + a_2F_2$, is a polyhedral asymmetric norm.
\end{lemma}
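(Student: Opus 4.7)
The plan is to use the two preceding propositions (Proposition \ref{F em termos de funcionais} and Proposition \ref{alpha gera Finsler}) to reduce the statement to an elementary identity about maxima of linear functionals, rather than trying to describe the unit ball of $a_1 F_1 + a_2 F_2$ directly as an intersection of half-spaces (which is geometrically awkward because summing asymmetric norms does not correspond to intersecting or adding balls in a simple way).

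First I would invoke Proposition \ref{F em termos de funcionais} to write
\[
F_1 = \max\{\alpha_1, \ldots, \alpha_m\}, \qquad F_2 = \max\{\beta_1, \ldots, \beta_n\},
\]
with $\alpha_i, \beta_j \in V^\ast\setminus\{0\}$. Then I would observe the pointwise identity
\[
a_1 F_1(y) + a_2 F_2(y) = a_1 \max_i \alpha_i(y) + a_2 \max_j \beta_j(y) = \max_{(i,j)}\bigl(a_1\alpha_i(y) + a_2\beta_j(y)\bigr),
\]
which follows immediately from $a_1,a_2>0$ by choosing indices $i^\ast, j^\ast$ that realize the individual maxima. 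Setting $\gamma_{ij} := a_1\alpha_i + a_2\beta_j \in V^\ast$ gives $a_1F_1 + a_2 F_2 = \max_{(i,j)}\gamma_{ij}$, a maximum over a finite family of linear functionals.

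To apply Proposition \ref{alpha gera Finsler} I need to verify that for every $y\in V\setminus\{0\}$ there is a pair $(i,j)$ with $\gamma_{ij}(y)>0$. Since $F_1$ and $F_2$ are asymmetric norms, $F_1(y)>0$ and $F_2(y)>0$, so there exist $i_0, j_0$ with $\alpha_{i_0}(y)>0$ and $\beta_{j_0}(y)>0$; then $\gamma_{i_0 j_0}(y) = a_1\alpha_{i_0}(y) + a_2\beta_{j_0}(y) >0$, as required. Proposition \ref{alpha gera Finsler} then yields that $a_1 F_1 + a_2 F_2$ is a polyhedral asymmetric norm.

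There is no substantive obstacle: the whole argument hinges on the trivial rearrangement $a_1\max x_i + a_2\max y_j = \max(a_1 x_i + a_2 y_j)$, which converts a sum of two maxima into a single max, thus fitting the hypothesis of Proposition \ref{alpha gera Finsler}. The only point deserving a line of care is confirming the positivity hypothesis for the combined family $\{\gamma_{ij}\}$, and this follows directly from the fact that each $F_k$ is already an asymmetric norm.
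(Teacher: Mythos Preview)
Your proof is correct and follows essentially the same approach as the paper: both write each $F_i$ as a maximum of linear functionals, use the identity $\max_i x_i + \max_j y_j = \max_{i,j}(x_i + y_j)$ (with positive scalars absorbed), and then apply Proposition~\ref{alpha gera Finsler}. The only cosmetic differences are that the paper first reduces to the cases $F_1+F_2$ and $aF_1$ separately, and that you spell out the verification of the positivity hypothesis of Proposition~\ref{alpha gera Finsler} which the paper leaves implicit.
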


\begin{proof}
	It is enough to prove that if $F_1$ and $F_2$ are polyhedral asymmetric norms and $a>0$, then $F_1 + F_2$ and $a.F_1$ are polyhedral asymmetric norms.
	The latter statement is trivial.
	For the proof of the former statement, set $F_1= \max\limits_{i=1,\ldots,k} \alpha_i$ and $F_2=\max\limits_{j=1,\ldots, m} \beta_j$, where $\{\alpha_i\}_{i=1,\ldots, k}$ and $\{\beta_j\}_{j = 1, \ldots, m}$ are families of linear functionals satisfying the conditions of Proposition \ref{alpha gera Finsler}. 
	Notice that
	\[
	F_1+F_2=\max_{i=1,\ldots,k} \{ \alpha_i \} + \max_{j=1,\ldots,m} \{ \beta_j \}=\max_{\substack{i=1,\ldots,k \\ j=1,\ldots,m}} \{ \alpha_i+\beta_j \}
	\]
	and $F_1+F_2$ is a polyhedral asymmetric norm due to Proposition \ref{alpha gera Finsler}.
\end{proof}

\begin{definition}
\label{definicao estrutura p-Finsler}
	Let $M$ be a differentiable manifold. A polyhedral Finsler structure (or shortly, $p$-Finsler structure) $F$ on $M$ is a $C^0$-Finsler structure such that $F_x:=F(x,\cdot):T_xM \rightarrow \mathbb R$ is a polyhedral asymmetric norm for every $x \in M$.
	\end{definition}

\begin{proposition}
	\label{existencia Finsler poliedral suave horizontal}
	Every differentiable manifold $M$ admits a $p$-Finsler structure.
\end{proposition}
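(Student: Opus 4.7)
The plan is to build a $p$-Finsler structure by gluing together trivial polyhedral norms defined in coordinate charts via a smooth partition of unity, and then invoking Lemma \ref{Sum of polyhedral} pointwise to conclude that the result is polyhedral.

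First I would fix a locally finite atlas $\{(U_\alpha, \varphi_\alpha)\}_{\alpha \in A}$ of $M$ together with a smooth partition of unity $\{\rho_\alpha\}_{\alpha \in A}$ subordinate to it, with $\mathrm{supp}\,\rho_\alpha \subset U_\alpha$. On each $U_\alpha$, write the induced coordinate vector fields as $\partial/\partial x^1_\alpha, \ldots, \partial/\partial x^n_\alpha$, and for each $(x, y) \in TU_\alpha$ with $y = \sum_i y^i_\alpha \, \partial/\partial x^i_\alpha$ define
\[
F_\alpha(x, y) = \max_{i = 1, \ldots, n} |y^i_\alpha|.
\]
For fixed $x \in U_\alpha$, the functionals $\pm dx^i_\alpha$ satisfy the hypothesis of Proposition \ref{alpha gera Finsler}, so $F_\alpha(x, \cdot)$ is a polyhedral asymmetric norm on $T_xM$. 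Moreover $(x, y) \mapsto F_\alpha(x, y)$ is continuous on $TU_\alpha$ because the coefficient functions $y \mapsto y^i_\alpha$ depend continuously on $(x, y)$ in the chart trivialization.

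Next I would define, for each $(x, y) \in TM$,
\[
F(x, y) = \sum_{\alpha \in A} \rho_\alpha(x)\, F_\alpha(x, y),
\]
with the convention $\rho_\alpha(x) F_\alpha(x, y) = 0$ whenever $x \notin U_\alpha$. Local finiteness of the cover ensures that for every $x_0 \in M$ there is an open neighborhood $W$ of $x_0$ intersecting only finitely many $U_\alpha$, so on $TW$ the sum reduces to a finite sum of continuous functions and $F$ is continuous on $TM$. For continuity across the boundary $\partial \mathrm{supp}\,\rho_\alpha$, one uses that $\rho_\alpha F_\alpha$ extends continuously by zero outside $U_\alpha$ because $\rho_\alpha$ vanishes there and $F_\alpha$ has only coordinate-linear growth in $y$ on each compact set of $x$'s.

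Finally, fix $x \in M$ and let $A(x) = \{\alpha \in A : \rho_\alpha(x) > 0\}$, which is finite. Since $\sum_{\alpha \in A(x)} \rho_\alpha(x) = 1 > 0$, at least one term appears. Then
\[
F(x, \cdot) = \sum_{\alpha \in A(x)} \rho_\alpha(x)\, F_\alpha(x, \cdot)
\]
is a finite positive linear combination of polyhedral asymmetric norms on $T_xM$, so by iterated application of Lemma \ref{Sum of polyhedral} it is a polyhedral asymmetric norm. Thus $F$ is a $C^0$-Finsler structure whose restriction to each tangent space is polyhedral, which is exactly the requirement of Definition \ref{definicao estrutura p-Finsler}. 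The only step with any subtlety is verifying global continuity of $F$ on $TM$, but local finiteness of the cover and the trivial extension of each $\rho_\alpha F_\alpha$ by zero handle this routinely; the pointwise polyhedrality is immediate from Lemma \ref{Sum of polyhedral}.
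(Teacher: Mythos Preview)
Your proof is correct and follows essentially the same approach as the paper: both take a locally finite atlas with a subordinate partition of unity, endow each chart with a constant polyhedral norm in the coordinate trivialization, and use Lemma \ref{Sum of polyhedral} to conclude that the weighted sum is polyhedral on each tangent space. Your version is simply more explicit about the choice of local norm (the sup norm) and about the continuity verification, which the paper leaves implicit.
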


\begin{proof}
	Let $\{(U_i,\mathbf x)\}_{i \in \mathbb N}$ be a locally finite cover by coordinate neighborhoods on $M$. 
	Let $\{\eta_i: M \rightarrow \mathbb R\}_{i \in \mathbb N}$ be a smooth partition of unity subordinated to $\{(U_i,\mathbf x)\}_{i \in \mathbb N}$.
	Consider these coordinate neighborhoods endowed with constant $p$-Finsler structures $F_i(x,y)=F_i(y)$ and set $F=\sum_{i \in \mathbb N} \eta_i F_i$.
	Then this sum is locally finite and $F$ is a $p$-Finsler structure on $M$ due to Lemma \ref{Sum of polyhedral}.
\end{proof}

\begin{definition}
	A differentiable manifold $M$ endowed with a $p$-Finsler structure is called a $p$-Finsler manifold.
\end{definition}

We end this section with some results about an asymmetric norm $F$ and its dual asymmetric norm $F_\ast$.
The basic reference here is Section 3.4 of \cite{Grunbaum}.
It is necessary to remark that in this work, the author study finite dimensional vector space $V$ endowed with an inner product $\left< \cdot, \cdot\right>$, and several types of duality are between objects of $(V,\left< \cdot, \cdot \right>)$.
But the inner product induces an isomorphism $v \mapsto \left< v, \cdot\right>$ between $V$ and $V^\ast$, and this isomorphism induces the corresponding duality between objects of $V$ and $V^\ast$ that will be used in this work.

\begin{remark}
\label{obsevacao correspondencia reversa}
Let $F_\ast : V^\ast \rightarrow \mathbb R$ be the dual asymmetric norm 
\[
F_\ast (\mathfrak{a}) = \max_{v\in B_F[0,1]} \mathfrak{a}(v)
\] 
of $F$. 
Then the unit ball $B_{F_\ast}[0,1] \subset V^\ast$ is the polar set of $B_F[0,1]$. 
Therefore $B_{F_{\ast}}[0,1]$ is a polyhedral subset of $V^\ast$ and consequently $F_\ast$ is a polyhedral asymmetric norm (see Exercise 5 (viii), Section 3.4 of \cite{Grunbaum}). 
The inclusion reversing correspondence $\Psi$ between the faces of $S_F$ and $S_{F_\ast}$ is given as follows:
If $L$ is a $k$-dimensional face of $S_F$, then $L^\ast = \Psi(L) = \{\mathfrak{a} \in S_{F_\ast}; \mathfrak{a}(v) = 1 \text{ for every }v \in L\}$ is a $(n-k-1)$-dimensional face of $S_{F_\ast}$. 
Similarly $\Psi^{-1}$ is given by $L= \Psi^{-1}(L^\ast) = \{v \in S_F;\mathfrak{a}(v) =1 \text{ for every }\mathfrak{a} \in L^\ast\}$ (see \cite[Section 3.4]{Grunbaum}). 
\end{remark}

\begin{remark}
	\label{remark face que maximiza funcional}
	Given $\mathfrak{a} \in \ri L^\ast$, we claim that $\mathfrak{a}$ is maximized in $S_F$ by $L = \Psi^{-1}(L^\ast)$.
	In fact, notice that $\mathfrak{a}(v) = 1$ for every $v \in L$ due to $L= \{v \in S_F;\mathfrak{a}(v) =1 \text{ for every }\mathfrak{a} \in L^\ast\}$. But we can't have $\mathfrak{a}(v)=1$ for $v \not \in L$ otherwise $\mathfrak{a}$ would be maximized by a face properly containing $L$, which would imply that $\mathfrak{a}$ lies in a face of $S_{F_\ast}$ properly contained in $L^\ast$, contradicting $\mathfrak{a} \in \ri L^\ast$.
\end{remark}

\section{The Extended Geodesic Field}
\label{The Extended Geodesic Field of a Lie Group}

In \cite{Fukuoka-Rodrigues} the authors define the Pontryagin type
$C^0$-Finsler structures which are structures that satisfy the minimum requirements of Pontryagin Maximum Principle. 

\begin{definition}
\label{definicao Pontryagin type}
A $C^0$-Finsler manifold $(M,F)$ is of Pontryagin type at $p\in M$ if there exist a neighborhood $U$ of $p$, a coordinate system $\phi = (x^1,\cdots,x^n):U\rightarrow\mathbb{R}^n$, with the respective natural coordinate system $d\phi = (x^1,\cdots,x^n,y^1,\cdots,y^n):TU\rightarrow\mathbb{R}^{2n}$ on the tangent bundle, and a family of $C^1$ unit vector fields $$\{x\mapsto X_u(x)=(y^1(x^1,\cdots,x^n,u),\cdots,y^n(x^1,\cdots,x^n,u)); u\in S^{n-1}\}$$ on $U$ parametrized by $u\in S^{n-1}$ such that 
\begin{itemize}
	\item[1.] $u\mapsto X_u(x)$ is a homeomorphism from $S^{n-1}\subset \mathbb{R}^n$ onto the unit sphere of $(T_xM,F(x,\cdot))$ for every $x\in U$;
	\item[2.] $(x,u)\mapsto(y^1(x^1,\cdots,x^n,u),\cdots, y^n(x^1,\cdots,x^n,u))$ is continuous;
	\item[3.] $(x,u)\mapsto\left(\frac{\partial y^1}{\partial x^i}(x^1,\cdots,x^n,u), \cdots,\frac{\partial y^n}{\partial x^i}(x^1,\cdots,x^n,u)\right)$ is continuous for every $i\in\{1,\cdots,n\}$.
\end{itemize}
We say that $F$ is of Pontryagin type on $M$ if it is of Pontryagin type for every $p\in M$.
\end{definition}

\begin{remark}
\label{observacao tipo Pontryagin nao depende sistema de coordenadas}
This definition doesn't depend on the choice of the coordinate system (see \cite[Remark 3.2]{Fukuoka-Rodrigues}).
\end{remark}

Denote $X_u(x)=f^i(x,u)\frac{\partial}{\partial x^i}=(f^1(x,u),\cdots,f^n(x,u))$ ({\em From now on the Einstein summation convention is in place}). 
The extended geodesic field is the rule $\mathcal{E}$ that associates each $(x,\xi)\in T^*U\backslash 0$ to the set \begin{eqnarray}\label{campo geodesico estendido}\mathcal{E}(x,\xi)=\left\{f^i(x,u)\frac{\partial}{\partial x^i}-\xi_j\frac{\partial f^j}{\partial x^i}(x,u)\frac{\partial}{\partial \xi_i}; \ u\in \mathcal{C}(x,\xi)\right\},
\end{eqnarray}
where $\mathcal{C}(x,\xi)=\{u\in S^{n-1}; \ \xi(X_u(x))=\max_{v\in S^{n-1}}\xi(X_v(x))\}$. 
An absolutely continuous curve $\gamma:[a,b]\rightarrow T^*M$ such that $\gamma'(t)\in\mathcal{E}(\gamma(t))$ for almost every $t\in [a,b]$ is called a Pontryagin extremal of $(M,F)$. \index{{Pontryagin}! extremal}

As a consequence of Pontryagin Maximum Principle, if $x(t)$ is a length minimizer on $U$, then there exist a Pontryagin extremal $(x(t),\xi(t))$ of $(U ,F)$ and 
\begin{equation}
\label{equacao energia constante pontryagin}
\mathcal{M}(x(t),\xi(t))=\max_{u\in S^{n-1}}\xi(t)(X_u(x(t))) = F_\ast (x(t),\xi(t))
\end{equation}
is constant.

Let $G$ be a Lie group with identity element $e$.
Consider the left translation $L_x: G \rightarrow G$, $L_x(z)=x.z$.
Let $F:TG \rightarrow \mathbb R$ be a left invariant $C^0$-Finsler structure $F$, that is, 
\[
F(x.z,(dL_x)_z(y)) = F(z,y)
\]
for every $x,z \in G$ and $y \in T_zG$.
In \cite[Section 8]{Fukuoka-Rodrigues} the authors prove that if $\varphi: \tilde G \times M \rightarrow M$ is a transitive differentiable action of a Lie group $\tilde G$ on a differentiable manifold $M$ and $F$ is a $\tilde G$-invariant $C^0$-Finsler structure on $M$, then $F$ is of Pontryagin type.
In particular, Lie groups endowed with left invariant $C^0$-Finsler structures are of Pontryagin type. 

From now on, $G$ is a Lie group endowed with a left invariant $C^0$-Finsler structure $F$. 
We denote its Lie algebra and its dual by $\mathfrak g$ and $\mathfrak g^\ast$ respectively.
The aim of this section is to  represent the extended geodesic field of $(G,F)$ on $\mathfrak{g}^*$.
Here we replace the control set $S^{n-1}$ by the unit sphere $\sfe \subset \mathfrak{g}$ because it is more convenient than the Euclidean sphere $S^{n-1}$.

In \cite{Sachkov-Agrachev}, the authors consider a Hamiltonian $h$ defined on $T^\ast G$ and they calculate the Hamiltonian system associated to $h$.
They also consider the case where $h$ is left invariant and they calculate the reduction of the Hamiltonian system to $\mathfrak g^\ast$.
These calculations can be adapted for our case where a control is also considered.
But we will give the complete proof of the representation of the extended geodesic field of $(G,F)$ on $\mathfrak g^\ast$ because it is more helpful for the reader than to point out the necessary adaptations which are necessary to prove this result in a format that we need.

The left invariant vector fields $x \mapsto X_u(x)=d(L_x)_e (u)$ define a family of unit vector fields on $G$ parametrized by $u$ satisfying all conditions of the definition of Pontryagin type. 
Choose a basis $\{e_1,\cdots,e_n\}$ of $\mathfrak{g}$ and let $(x^1,\cdots,x^n)$ be a coordinate system in a neighborhood $U$ of $e$ such that $\frac{\partial}{\partial x^i}(e) = e_i$, $i\in\{1,\cdots,n\}$.
For every $i\in\{1,\cdots,n\}$, let $X_i$ be the left invariant vector field such that $X_i(e)=e_i$. We can write $$X_i(x)=d(L_x)_e(e_i)=b_i^j(x) \frac{\partial}{\partial x^j} \in TU,$$ where $b_i^j\in C^{\infty}(U)$. Since $X_u$ is left invariant, 
$$f^i(x,u)\frac{\partial}{\partial x^i}=X_u(x)=d(L_x)_e(u)=d(L_x)_e(u^i e_i)= u^i b_i^j(x)\frac{\partial}{\partial x^j},$$
implying that
$f^i(x,u)=u^j b_j^i(x).$

Therefore, in a Lie group, the extended geodesic field (\ref{campo geodesico estendido}) turns out to be \begin{eqnarray}\label{campo geodesico no grupo de Lie}\mathcal{E}(x,\xi)=\left\{u^j b_j^i(x)\frac{\partial}{\partial x^i}-\xi_j u^k\frac{\partial b_k^j}{\partial x^i}(x)\frac{\partial}{\partial \xi_i}; \ u\in \mathcal{C}(x,\xi)\right\}.
\end{eqnarray}

Now let us prove that $\mathcal{E}$ can be represented on $\mathfrak{g}^*$. In what follows, $\textrm{ad}^*$ is the infinitesimal coadjoint representation on $\mathfrak{g}^*$ (see \cite{SanMartin}).

\begin{theorem}
\label{teorema equivalencia fibrado cotagente algebras}
	Let $(G,F)$ be a $C^0$-Finsler Lie group with Lie algebra  $\mathfrak{g}$ and its dual $\mathfrak{g}^*$. 
	Let  $\widetilde{\mathcal{E}}$ be the rule that associates each element of $\mathfrak{g}^\ast\backslash \{0\}$ to the set \begin{eqnarray}\label{camponaalgebra}\widetilde{\mathcal{E}}(\mathfrak{a})=\{-\ad^*(u)(\mathfrak{a}); \ u\in \mathcal{C}(\mathfrak{a})\},
	\end{eqnarray} 
where $\mathcal{C}(\mathfrak{a}) := \{u\in \sfe; \ \mathfrak{a}(u)=\max_{v\in \sfe}\mathfrak{a}(v)\}$.
	Then finding Pontryagin extremals of $(G,F)$ is equivalent to  find $(u(t), \mathfrak{a}(t))$ in $S_{F_e}\times \mathfrak{g}^* \backslash\{0\}$ such that $\mathfrak{a}(t)$ is a solution of the differential inclusion \begin{eqnarray}\label{differential inclusion algebra}
		\mathfrak{a}'(t) \in \widetilde{\mathcal{E}}(\mathfrak{a}(t)),
	\end{eqnarray} 
	and $u(t)\in \mathcal{C}(\mathfrak{a}(t))$ is a measurable control.
\end{theorem}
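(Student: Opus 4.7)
The strategy is the standard left-trivialization of the cotangent bundle. Define $\Phi\colon T^*G\to G\times\mathfrak{g}^*$ by $\Phi(x,\xi)=(x,\mathfrak{a})$ with $\mathfrak{a}(v)=\xi((dL_x)_e v)$ for all $v\in\mathfrak{g}$; in the coordinates fixed before (\ref{campo geodesico no grupo de Lie}) this reads $\mathfrak{a}_i=\xi_j\,b^j_i(x)$. Because $\xi(X_u(x))=\mathfrak{a}(u)$, the maximization condition defining $\mathcal{C}(x,\xi)$ translates directly into $\mathcal{C}(\mathfrak{a})$ as defined in the statement, and by left-invariance of $F$ this correspondence is compatible across the covering of $G$ by left-translates of the coordinate neighborhood $U$ of $e$.

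The core of the argument is a direct computation of $\mathfrak{a}'(t)$ along a Pontryagin extremal $(x(t),\xi(t))$ with control $u(t)$. Substituting $x^{m\prime}=u^k b^m_k(x)$ and $\xi'_j=-\xi_l u^k\partial_j b^l_k(x)$ from (\ref{campo geodesico no grupo de Lie}) into the product-rule expansion of $\mathfrak{a}_i(t)=\xi_j(t)\,b^j_i(x(t))$, and relabelling dummy indices, one obtains
\begin{equation*}
\mathfrak{a}'_i(t)=u^k(t)\,\xi_j(t)\bigl[b^m_k(x(t))\,\partial_m b^j_i(x(t))-b^m_i(x(t))\,\partial_m b^j_k(x(t))\bigr].
\end{equation*}
The bracketed factor is exactly the $\partial_j$-component of the coordinate expression of $[X_k,X_i]$, and since $[X_k,X_i]=c^l_{ki}X_l$ with structure constants defined by $[e_k,e_i]=c^l_{ki}e_l$, it equals $c^l_{ki}\,b^j_l(x(t))$. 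Hence $\mathfrak{a}'_i=u^k c^l_{ki}\mathfrak{a}_l$, which is $\bigl(-\ad^*(u)\mathfrak{a}\bigr)(e_i)$ for the coadjoint convention of \cite{SanMartin}, proving $\mathfrak{a}'(t)\in\widetilde{\mathcal{E}}(\mathfrak{a}(t))$.

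For the converse, given measurable $u(t)$ and absolutely continuous $\mathfrak{a}(t)$ satisfying (\ref{differential inclusion algebra}) with $u(t)\in\mathcal{C}(\mathfrak{a}(t))$, I would integrate the Carath\'eodory ODE $x'(t)=(dL_{x(t)})_e u(t)$ on $G$, define $\xi(t)$ by $\xi(t)\bigl((dL_{x(t)})_e v\bigr)=\mathfrak{a}(t)(v)$, and run the previous computation in reverse to verify (\ref{campo geodesico no grupo de Lie}). I expect the main obstacle to be the index manipulation identifying the bracketed factor with $c^l_{ki}b^j_l(x)$; conceptually this is just the coordinate form of the structure equations, but careful bookkeeping is needed, and the choice of sign convention for $\ad^*$ must be fixed at the outset so that the stated minus sign is reproduced.
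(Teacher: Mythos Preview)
Your approach is essentially the paper's: left-trivialize $T^*G$ via $\mathfrak{a}_i=\xi_j\,b^j_i(x)$, differentiate along an extremal using (\ref{sistema no grupo}), and recognize the resulting expression as the bracket of left-invariant fields to obtain $\mathfrak{a}'=-\ad^*(u)\mathfrak{a}$. The paper writes the bracket $[X_{u(t)},X_i]$ directly and then invokes left-invariance to pull it down to $\mathfrak{g}$, whereas you pass through the structure constants $c^l_{ki}$; the two computations are line-for-line equivalent.

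The one point you do not address, and which the paper handles explicitly, is global existence in the converse direction. Carath\'eodory's theorem gives only a \emph{local} solution $x(t)$ of $x'=(dL_x)_e u(t)$; for the claimed equivalence you need $x(t)$ to exist on the entire interval $I$ on which $(u(t),\mathfrak{a}(t))$ is defined. The paper closes this gap by fixing an auxiliary left-invariant Riemannian metric $g$ on $G$ (so $(G,g)$ is complete), observing that $x(t)$ has unit $F$-speed and hence bounded $g$-speed by the left-invariant version of Proposition~\ref{poliedral-bilipschitz}, and then running a Hopf--Rinow-style argument: if $(t_i)$ is Cauchy in $I$ then $(x(t_i))$ is Cauchy in $(G,g)$ and converges, so any local solution extends continuously to the endpoint and, if that endpoint is interior to $I$, beyond it. Without this step your converse produces only a local Pontryagin extremal, not one on all of $I$.
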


\begin{proof} 
First of all we prove that finding Pontryagin extremals $(x(t),\xi (t))$ of $(G,F)$ is equivalent to find $(u(t), \mathfrak{a}(t))$ in $S_{F_e} \times \mathfrak{g}^\ast\backslash \{0\}$ such that $u(t)$ is a measurable control, $\mathfrak{a}(t)$ is an absolutely continuous function, $\mathfrak{a}^\prime(t)=-\mathrm{ad}^*(u(t))(\mathfrak{a}(t))$ and $u(t) \in \mathcal{C}(\mathfrak{a}(t))$. 

Let $(x(t),\xi(t))$ a Pontryagin extremal of $(G,F)$ and $u(t)\in\mathcal{C}(x(t),\xi(t))$ the corresponding admissible control. Then,
\begin{eqnarray}\label{sistema no grupo}\left\{\begin{array}{l}
		\frac{dx^i}{dt}= u^j(t) b_j^i(x(t))\\
		\frac{d\xi_i}{dt}=-\xi_j(t) u^k(t) \frac{\partial b_k^j}{\partial x^i}(x(t))
	\end{array}\right.\end{eqnarray}
Using the pullback of $L_{x(t)}$ at $e$, we define
\begin{eqnarray}\label{def de a} \mathfrak{a}(t)=d(L_{x(t)}^*)_e(\xi(t))=\xi(t)(d(L_{x(t)})_e)\in\mathfrak{g}^*.\end{eqnarray} 
Notice that given $y\in\mathfrak{g}$,
we have 
$$d(L_x)_e(y)=d(L_x)_e\left(y^ie_i\right)=y^id(L_x)_e(e_i)=y^ib_i^j(x)\frac{\partial}{\partial x^j}.$$
Then,
$$\begin{array}{rcl}\mathfrak{a}(t)(y)&=&d(L_{x(t)}^*)_e(\xi(t))(y)\\
	&=&\xi(t)(d(L_{x(t)})_e(y))\\
	&=&\xi_k(t)dx^k\left(y^ib_i^j(x(t))\frac{\partial}{\partial x^j}\right)\\
	&=&\xi_j(t)b_i^j(x(t))y^i,
\end{array}$$
and writing $\mathfrak{a}(t)=a_i(t)e^i$, where $\{e^1,\cdots,e^n\}$ is the dual basis of $\{e_1,\cdots,e_n\}$, we have
\begin{eqnarray}\label{relacao lambda e xi}a_i(t)=\xi_j(t)b_i^j(x(t)).\end{eqnarray} Calculating the derivative of the last equality and using the expressions of $\xi_i'$ and $(x^i)'$ from (\ref{sistema no grupo}) we get 
\begin{align}
	a_i^\prime(t) & = \xi_l '(t)b_i^l(x(t))+\xi_j(t)(x^l)'(t)\frac{\partial b_i^j}{\partial x^l}(x(t)) \nonumber \\
	& = -\xi_j(t) u^k(t) \frac{\partial b_k^j}{\partial x^l}(x(t))b_i^l(x(t))+\xi_j(t) u^k(t) b_k^l(x(t))\frac{\partial b_i^j}{\partial x^l}(x(t)) \label{recupera xi} \\
	& = \xi(t)\left[u^k(t) b_k^l(x(t))\frac{\partial}{\partial x^l},b_i^j(x(t))\frac{\partial}{\partial x^j}\right] \nonumber \\
	& = \xi(t)[X_{u(t)}(x(t)),d(L_{x(t)})_e(e_i)] \nonumber \\
	& = \xi(t)\circ d(L_{x(t)})_e[u(t),e_i] \nonumber \\
	& = \mathfrak{a}(t)(\textrm{ad}(u(t))(e_i)) \nonumber \\
	& = -\textrm{ad}^*(u(t))(\mathfrak{a}(t))\left(e_i\right).\nonumber
\end{align}
Therefore
\begin{eqnarray}\label{sistema na algebra}
	\mathfrak{a}^\prime(t)=-\textrm{ad}^*(u(t))(\mathfrak{a}(t)),
\end{eqnarray}
where $u(t)\in\mathcal{C}(\mathfrak{a}(t)) := \{u \in \sfe; \mathfrak{a}(t)(u) = \max_{v \in \sfe} \mathfrak{a}(t)(v)\}$ is an admissible control and 
\begin{equation}
	\label{norma dual constante}
	\mathcal{M}(\mathfrak{a}(t))=\max_{u\in \sfe}\mathfrak{a}(t)(u)
\end{equation}
is constant.
Therefore a Pontryagin extremal $(x(t),\xi(t))$ induces a function $(u(t),\mathfrak a(t))$ which is essentially $(x^\prime(t), \xi(t))$ represented in $\mathfrak g \times \mathfrak g^\ast$ and satisfies (\ref{sistema na algebra}).

Reciprocally, fix a point $x_0\in G$ and a solution $(u(t),\mathfrak{a}(t))$ of (\ref{sistema na algebra}) such that $\mathfrak a(t)$ is absolutely continuous and $u(t)\in\mathcal{C}(\mathfrak{a}(t))$ is bounded and measurable. 
Then, by the Carathéodory existence and uniqueness theorem (see \cite{Hale}), there is a unique absolutely continuous curve $x(t)$ which is solution of 
\begin{equation}
\label{equacao recupera trajetoria}
\dot{x}=d(L_x)_e(u(t))=X_{u(t)}(x)
\end{equation}
and $x(0)=x_0$. 
The proof that this solution extends to the whole interval of definition $I$ of $(u(t),\mathfrak{a}(t))$ can be done adapting the proof of the classical Hopf-Rinow theorem of Riemannian geometry. 
Fix a left-invariant Riemannian metric $g$ on $G$. 
Then $(G,g)$ is a complete Riemannian manifold.
Notice that $c\ell_g (x) \leq \ell_F (x) \leq C.\ell_g (x)$ due to Proposition 1 and the left invariance of $F$ and $g$. 
If $(t_i)$ is a Cauchy sequence in $I$, then $(x (t_i))$ is a Cauchy sequence in $(G,g)$ because $x(t)$ is parametrized by arclength in $(G,F)$. 
Therefore any solution $x(t)$ of (\ref{equacao recupera trajetoria}) defined on an open interval $(a,b)$, with $b\in I$, can be extended continuously to $b$. 
If $b$ is an interior point of $I$, then the solution of (\ref{equacao recupera trajetoria}) can be extended beyond $b$.
Consequently $x(t)$ can be extended to $I$
and 
$$
\frac{dx^i}{dt}= u^j(t) b_j^i(x(t))
$$
holds in $I$.
 
Define $\xi(t)=d(L_{(x(t))^{-1}}^*)_{x(t)}(\mathfrak{a}(t))\in T_{x(t)}^*G$, which is equivalent to (\ref{def de a}) and implies (\ref{relacao lambda e xi}). Since the product of two absolutely continuous functions defined on a closed interval is absolutely continuous and $\mathfrak{a}(t)$ is absolutely continuous, so is $\xi$ due to (\ref{relacao lambda e xi}), the smoothness of $b^j_i$ and the fact that the matrix $(b^j_i(x))$ is everywhere invertible. 
Moreover, from (\ref{relacao lambda e xi}) and (\ref{sistema na algebra}), we get (\ref{recupera xi}) and
$$-\xi_j(t)u^k(t) \frac{\partial b_k^j}{\partial x^l}(x(t))b_i^l(x(t))+\xi_j(t)(b_i^j(x(t)))'=a_i'(t)=(\xi_j(t)b_i^j(x(t)))'.$$ 
Therefore 
$$
[(\xi_l(t))'+\xi_j(t)u^k(t) \frac{\partial b_k^j}{\partial x^l}(x(t))]b_i^l(x(t))=0
$$ 
and since the matrix $(b_i^l(x))$ is invertible, we have  $$\frac{d\,\xi_l(t)}{dt}=-\xi_j(t)f^k(e,u(t))\frac{\partial b_k^j}{\partial x^l}(x(t))$$ for every $l\in\{1,\cdots,n\}$. Thus $(x(t),\xi(t))$ is solution of (\ref{sistema no grupo}) for $u(t)\in\mathcal{C}(\mathfrak{a}(t))$.

Finally notice that finding $(u(t), \mathfrak{a}(t))$ in $S_{F_e}\times \mathfrak{g}^*\backslash\{0\}$ such that $\mathfrak{a}(t)$ is a solution of  (\ref{camponaalgebra}) and $u(t)\in \mathcal{C}(\mathfrak{a}(t))$ is a measurable control is equivalent to find $(u(t), \mathfrak{a}(t))$ satisfying (\ref{sistema na algebra}), what settles the proof.
\end{proof}

\begin{definition} 
A solution $(u(t),\mathfrak{a}(t))$ of $\mathfrak{a}^\prime(t)=-\textrm{ad}^*(u(t))(\mathfrak{a}(t))$, where $u(t) \in \mathcal{C}(a(t))$ is a measurable function, is also called a Pontryagin extremal of $(G,F)$ and $\mathfrak{a}(t)$ is called the vertical part of a Pontryagin extremal.
\end{definition}

\begin{remark}
\label{nao e suficiente ser solucao inclusao diferencial}
Notice that the solution $\mathfrak{a}(t)$ of the differential inclusion $\mathfrak{a}'(t) \in \widetilde{\mathcal{E}}(\mathfrak{a}(t))$ isn't enough to determine a Pontryagin extremal. 
We also need a control $u(t)$ such that $(u(t),\mathfrak{a}(t))$ is a Pontryagin extremal.
\end{remark}

\section{The Limit Flag Curvature}\label{secao curvatura}

In this section, we introduce the asymptotic expansion of the flag curvature for $p$-Finsler Lie groups with respect to $\mathfrak{a} \in \mathfrak{g}^\ast \backslash \{0\}$. This definition is given through approximation by sectional curvatures of a family of Riemannian metrics.

\begin{remark}
	\label{quase sempre}
	The term ``flag curvature" comes from Finsler geometry (see \cite{BaoChernShen}). It is a generalization of sectional curvature of Riemannian geometry. 
	Let us explain with more details.
	
	In Riemannian geometry, the Riemannian metric $g$ is determined by its components $g_{ij} = g(\partial/\partial x_i, \partial / \partial x_j)$ with respect to a coordinate system $\{x^1, \ldots, x^n\}$ on $M$.
	It is well known that the sectional curvatures are determined in terms of $g_{ij}$.
	 
	Let $\pi: TM \rightarrow M$ be the natural projection. 
	In Finsler geometry, the fundamental tensor $\tilde g$ is defined on the pulled back tangent bundle $\pi^\ast TM$.
	In the natural coordinate system $(x,y) = (x^1, \ldots , x^n,y^1, \ldots, y^n)$ of the slit tangent bundle $TM\backslash 0 := \{(x,y) \in TM, y\neq 0\}$, $\tilde g$ is given by
	\[
	\tilde g_{ij(x,y)} dx^i \otimes dx^j = \frac{1}{2}\frac{\partial F^2}{\partial y^i \partial y^j} dx^i \otimes dx^j \text{ (See \cite{BaoChernShen})}.
	\]
	Notice that in Finsler geometry, $\tilde g_{ij}$ also depends on $y$. 
	The key idea here is that $\tilde g_{ij(x,y)} dx^i \otimes dx^j$ in $(x,y)\in TM\backslash 0$ can be seen as the best Riemannian approximation of $F^2\vert_{T_xM}$ in a neighborhood of $y \in T_xM$.

	The flag curvature of a Finsler manifold is completely determined by its fundamental tensor, and it depends not only on the two-dimensional subspace $\sigma$ of $T_xM$, but also on the non-zero vector $y\in \sigma$.
	In other words, it depends on the flag $\{y,\sigma\}$.
	Therefore when we have $C^0$-Finsler structure $F$ such that the restriction of $F^2$ to their tangent spaces aren't inner products, it is natural to consider Riemannian approximations of $F^2\vert_{T_xM}$ in a neighborhood of $y$.
	We will use this idea in this section.
		
	Although the main idea to define limit flag curvature comes from Finsler geometry, the calculations made here are purely from Riemannian geometry since the approximation is made using Riemannian metrics. 
\end{remark} 

Let $G$ be a Lie group endowed with a left invariant Riemannian metric $g$. Then the sectional curvatures are calculated in \cite{Milnor} as follows. Fix an oriented orthonormal basis $\{v_1,\cdots , v_n\}$ on the Lie algebra $\mathfrak{g}$ and define the structure constants $\alpha^{g}_{ijk}$ by $[v_i, v_j] = \sum_k\alpha^{g}_{ijk}v_k$. Observe that $\alpha^{g}_{ijk} = \langle[v_i, v_j], v_k\rangle$. The sectional curvature of $\textnormal{span} \{v_1, v_2\}$ is given by
\begin{eqnarray}\kappa_g(v_1,v_2)&=&\sum_j\left(\frac{1}{2}\alpha^{g}_{12j}(-\alpha^{g}_{12j}+\alpha^{g}_{2j1}+\alpha^{g}_{j12})\right.\nonumber \\
	&-&\left.\frac{1}{4}(\alpha^{g}_{12j}-\alpha^{g}_{2j1}+\alpha^{g}_{j12})(\alpha^{g}_{12j}+\alpha^{g}_{2j1}-\alpha^{g}_{j12})-\alpha^{g}_{j11}\alpha^{g}_{j22}\right) \label{secional grupo de Lie}.
\end{eqnarray}

Now we introduce a type of asymptotic flag curvature for left invariant $p$-Finsler structures on $G$. 
Since we are using the Hamiltonian formalism on Lie groups, this curvature will depend on $\mathfrak{a} \in \mathfrak{g}^\ast\backslash \{0\}$. 

Consider $\mathfrak{a} \in S_{F_\ast}$ and $v_1 \in S_{F_e}$ that maximizes $\mathfrak{a}$ on $S_{F_e}$. 
Let $\sigma $ be a two-dimensional subspace of $\mathfrak{g}^\ast$ containing $v_1$.
Notice that $\ker \mathfrak{a}$ is transversal to $\spann \{v_1\}$ due to $\mathfrak{a}(v_1) = 1$.
Then we can choose $v_2$ in the one-dimensional space $\sigma \cap \ker \mathfrak{a}$ and we have $\sigma = \spann \{v_1, v_2\}$.
The vector $v_1$ is called the {\em flagpole} and $v_2$ is the {\em transverse edge} of the flag $\{ v_1, \sigma\}$.
For the sake of simplicity, we denote $(v_1, v_2):=\{ v_1, \sigma\}$.

The face of $S_{F_e}$ that maximizes $\mathfrak{a}$ is contained in the affine subspace $\ker \mathfrak{a} + \{v_1\}$.
We will introduce the asymptotic expansion of the flag curvature of $(v_1, v_2)$ with respect to $\mathfrak{a}$.
Let $\{ v_2, \ldots , v_n\}$ be a basis of $\ker \mathfrak{a}$.
Then $\mathcal B := \{v_1, \ldots, v_n\}$ is a basis of $\mathfrak{g}$ associated to a left invariant Riemannian metric $g$ on $G$ such that $\{v_1, \ldots, v_n\}$ is an orthonormal basis of $g\vert_\mathfrak{g}$.
Now we are ready to introduce the curvatures associated to this setting.

\begin{definition}
\label{definicao curvatura assintotica}
	Consider $\mathcal{B}$ given in the previous paragraph. 
	Define for each $k\in\mathbb{N}^*$ the basis $\mathcal{B}^{k}:=\{v_1,kv_2,\cdots,kv_n\}$ of $\mathfrak{g}$. Consider $g_{k}$ the unique left invariant Riemannian metric that makes $\mathcal{B}^{k}$ an orthonormal basis of $\mathfrak{g}$. 
	Define the function associated to the flag $(v_1,v_2)$
	$$\kappa_{g_k}(v_1,kv_2)=\kappa_{g_k}(v_1,v_2)$$ 
	which depends on $k$. We call it the asymptotic expansion of flag curvature of $(v_1, v_2)$ in $(\mathfrak{g}, F_e)$ with respect to $\mathcal{B}$.
\end{definition}

\begin{remark}
\label{observacao nao precisa Finsler}
Notice that $\kappa_{g_k}(v_1,v_2)$ is calculated using (\ref{secional grupo de Lie}), which is an object of Riemannian geometry.
Therefore no element of Finsler geometry is required.
\end{remark}

\begin{remark}
\label{observacao esfera converge a hiperplano}
	Observe that as $k \rightarrow \infty$, the unit sphere of $(\mathfrak g,g_k)$ converges locally to $\{v_1\} + \ker \mathfrak{a}$. 
	In fact, consider the coordinate system $(x^1, \ldots, x^n)$ on $\mathfrak{g}$ with respect to the basis $(v_1, \ldots, v_n)$.
	The affine subspace $\{v_1\} + \ker \mathfrak{a}$ is the graph of the function $(x^2, \ldots, x^n) \rightarrow x^1$ given by $x^1 = 1$.
	The unit sphere of $(\mathfrak{g},g_k)$ is the ellipsoid given by
	\[
	(x^1)^2 + \frac{\sum_{i=2}^n (x^i)^2}{k^2} = 1,
	\]
which is the graph of the function
	\begin{equation}
	\label{funcao elipsoide}
	x^1 = \sqrt{1 - \frac{\sum_{i=2}^n (x^i)^2}{k^2}}
	\end{equation}
	in a neighborhood of $v_1 = (1,0, \ldots, 0)$. 
	Notice that if $k \rightarrow \infty$, then (\ref{funcao elipsoide}) converges uniformly to $x^1 = 1$ in a neighborhood of $0 \in \spann\{v_2, \ldots, v_n\}$.
 \end{remark}

Let $s,t,u \in \{2, \ldots, n\}$. Then it is straightforward that
\begin{equation}
	\label{estruturas assintotica}
	\begin{array}{ccc}\alpha^{g_{k}}_{1s1}=k \,\alpha^{g}_{1s1},& \alpha^{g_{k}}_{1st}=\alpha^{g}_{1st}, & \alpha^{g_{k}}_{st1}=k^2 \alpha^{g}_{st1} \text{ and }\alpha^{g_{k}}_{stu}=k \alpha^{g}_{stu}. 
	\end{array}
\end{equation}

We split the analysis of $\kappa_{g_k}$ in two cases: $n=2$ and $n\geq 3$. 

If $n=2$, then (\ref{estruturas assintotica}) and $\alpha^g_{ijk}=-\alpha_{jik}^g$ give 
\begin{align}
\label{equacao polinomio dimensao 2} \kappa_{g_k}(v_1,v_2) = -(\alpha^{g_k}_{121})^2 -(\alpha^{g_k}_{122})^2 = -k^2 (\alpha^{g}_{121})^2 - (\alpha^{g}_{122})^2.
\end{align}

For $n\geq 3$, if we split (\ref{secional grupo de Lie}) in the cases $j=1$ and $j\geq 2$ and consider (\ref{estruturas assintotica}), we get 
\begin{align}
\kappa_{g_k}(v_1,v_2) & = k^4 \sum_{j=3}^n \frac{1}{4}(\alpha^g_{2j1})^2 && \nonumber \\
& -k^2 \left( \alpha^g_{121} \right)^2 + k^2 \sum_{j=2}^n \left( \alpha^g_{2j1} \left(\frac{1}{2}\alpha^g_{12j} - \frac{1}{2}\alpha^g_{j12} \right) - \alpha^g_{j11} \alpha^g_{j22} \right) \nonumber && \\
& + \sum_{j=2}^n \alpha^g_{12j} \left(-\frac{3}{4}\alpha^g_{12j}+\frac{1}{2}\alpha^g_{j12} \right) + \frac{1}{4}\left( \alpha^g_{j12}\right)^2.\label{equacao polinomio dimensao 3} 
\end{align}
In both cases $n=2$ and $n\geq 3$, $\kappa_{g_k} (v_1, v_2)$ is a polynomial in $k$.

\begin{definition}
\label{definition highest degree}
The coefficient of the highest degree term of $\kappa_{g_k}(v_1,v_2)$ is denoted by $\mathcal{K}_{\mathcal{B}}(\mathfrak{a})$.
\end{definition}

\begin{proposition}
\label{proposicao maior quociente}
\begin{equation}
\nonumber
\mathcal{K}_{\mathcal{B}}(\mathfrak{a}) = -(\alpha^g_{121})^2 \;\;\text{ if }\;\; n=2
\end{equation}
and
\begin{equation}
\nonumber
\mathcal{K}_{\mathcal{B}}(\mathfrak{a}) = \sum_{j=3}^n \frac{1}{4}(\alpha^g_{2j1})^2 \;\; \text{ if }\;\; n\geq 3.
\end{equation}
\end{proposition}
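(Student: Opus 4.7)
The plan is almost immediate: by Definition~\ref{definition highest degree}, $\mathcal{K}_{\mathcal{B}}(\mathfrak{a})$ is the leading coefficient of the polynomial $k \mapsto \kappa_{g_k}(v_1, v_2)$, and this polynomial has already been written out explicitly in equations~(\ref{equacao polinomio dimensao 2}) and~(\ref{equacao polinomio dimensao 3}) of the excerpt. The proof therefore reduces to reading off the leading coefficient in each case, so essentially no new computation is needed.

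For $n = 2$, formula~(\ref{equacao polinomio dimensao 2}) gives
\[
\kappa_{g_k}(v_1, v_2) = -k^2 (\alpha^g_{121})^2 - (\alpha^g_{122})^2,
\]
which is a polynomial in $k$ of degree at most $2$ whose $k^2$-coefficient is $-(\alpha^g_{121})^2$; this settles the first formula. For $n \geq 3$, formula~(\ref{equacao polinomio dimensao 3}) displays $\kappa_{g_k}(v_1, v_2)$ as a polynomial of degree at most $4$ in $k$, partitioned into a $k^4$ piece $k^4\sum_{j=3}^n \tfrac{1}{4}(\alpha^g_{2j1})^2$, a $k^2$ piece, and a constant piece. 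Reading off the coefficient of $k^4$ immediately yields the second formula.

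The only point worth stressing—and the closest thing to an obstacle—is that, a priori, substituting the scalings~(\ref{estruturas assintotica}) into the sectional curvature formula~(\ref{secional grupo de Lie}) could produce odd-power terms $k$, $k^3$, or $k^5$ that would invalidate the identification of the leading coefficient. That this does not occur is a parity observation: each structure constant $\alpha^{g_k}_{abc}$ scales as $k^{p}$ with $p \in \{0,1,2\}$ determined by the number of indices equal to $1$, and the specific pairings that appear in~(\ref{secional grupo de Lie})---namely $(\alpha^{g_k}_{12j})^2$, $\alpha^{g_k}_{12j}\alpha^{g_k}_{2j1}$, $\alpha^{g_k}_{12j}\alpha^{g_k}_{j12}$, $(\alpha^{g_k}_{2j1})^2$, $\alpha^{g_k}_{2j1}\alpha^{g_k}_{j12}$, $(\alpha^{g_k}_{j12})^2$, and $\alpha^{g_k}_{j11}\alpha^{g_k}_{j22}$---are constrained by antisymmetry and by the fixed flag indices $1,2$ to have total scaling only $k^0$, $k^2$, or $k^4$. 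In particular the unique source of a $k^4$ contribution is $-\tfrac{1}{4}(\alpha^{g_k}_{2j1})^2 \cdot (-1)$ for $j\geq 3$, giving exactly the coefficient claimed. This parity check was tacitly done when deriving~(\ref{equacao polinomio dimensao 3}), so nothing remains beyond bookkeeping.
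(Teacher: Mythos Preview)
Your proposal is correct and matches the paper's approach: the paper gives no explicit proof of this proposition, treating it as an immediate consequence of the polynomial expressions~(\ref{equacao polinomio dimensao 2}) and~(\ref{equacao polinomio dimensao 3}) derived just before it, which is exactly what you do. Your parity remark in the third paragraph is sound but, as you yourself note, already absorbed in the derivation of~(\ref{equacao polinomio dimensao 3}); it adds nothing beyond reassurance.
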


\begin{definition}
Let $S$ be a subset of $\mathfrak{g}$.
The normalizer of $S$ in $\mathfrak{g}$ is the subset $N_{\mathfrak{g}}(S) = \{\mathfrak{b} \in \mathfrak{g}; [\mathfrak{b},S] \subset S\}$ of $\mathfrak{g}$. 
\end{definition}

\begin{proposition}
\label{proposicao k fresco e algebrico}
Suppose that $n\geq 3$. Then $\mathcal{K}_{\mathcal{B}}(\mathfrak{a}) = 0$ iff $v_2 \in \ker \mathfrak{a} \cap N_{\mathfrak{g}}(\ker \mathfrak{a})$.
\end{proposition}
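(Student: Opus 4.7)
The plan is to translate the condition $\mathcal{K}_{\mathcal{B}}(\mathfrak{a})=0$ into a statement about Lie brackets and then to recognize that statement as precisely the normalizer condition. By Proposition \ref{proposicao maior quociente}, in the case $n\geq 3$ we have
\[
\mathcal{K}_{\mathcal{B}}(\mathfrak{a}) \;=\; \sum_{j=3}^n \tfrac{1}{4}(\alpha^{g}_{2j1})^2,
\]
which is a sum of squares of real numbers. Hence it vanishes if and only if $\alpha^{g}_{2j1}=0$ for every $j\in\{3,\ldots,n\}$.

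Next, I would exploit the orthonormality of $\mathcal{B}=\{v_1,v_2,\ldots,v_n\}$ in the metric $g$. By definition of the structure constants, $\alpha^{g}_{2j1}=\langle [v_2,v_j],v_1\rangle$. Because $v_1$ is $g$-orthogonal to $\spann\{v_2,\ldots,v_n\}=\ker\mathfrak{a}$ (the basis $\mathcal B$ was constructed so that $\{v_2,\ldots,v_n\}$ is a basis of $\ker\mathfrak{a}$, and $v_1$ is orthogonal to all these vectors), the vector $[v_2,v_j]\in\mathfrak{g}$ lies in $\ker\mathfrak{a}$ if and only if its $v_1$-component vanishes, that is, if and only if $\alpha^{g}_{2j1}=0$.

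Finally, I would connect this to the normalizer. Since $v_2\in\ker\mathfrak{a}$ automatically by construction, the condition $v_2\in N_{\mathfrak{g}}(\ker\mathfrak{a})$ amounts to $[v_2,v_j]\in\ker\mathfrak{a}$ for all $j\in\{2,\ldots,n\}$. The case $j=2$ is trivial because $[v_2,v_2]=0$, so the nontrivial content is exactly the requirement $[v_2,v_j]\in\ker\mathfrak{a}$ for $j\in\{3,\ldots,n\}$. By the previous paragraph this is equivalent to $\alpha^{g}_{2j1}=0$ for $j\in\{3,\ldots,n\}$, which in turn is equivalent to $\mathcal{K}_{\mathcal{B}}(\mathfrak{a})=0$. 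Chaining the three equivalences gives the proposition.

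Conceptually, there is no real obstacle here: once Proposition \ref{proposicao maior quociente} has isolated the dominant coefficient as $\tfrac{1}{4}\sum_{j\geq 3}(\alpha^{g}_{2j1})^2$, the proof is a purely algebraic matching between the geometric meaning of $\alpha^{g}_{2j1}=\langle[v_2,v_j],v_1\rangle$ and the defining condition of $N_{\mathfrak{g}}(\ker\mathfrak{a})$. The only point requiring care is to remember that $v_2$ is already in $\ker\mathfrak{a}$, so the conjunction $\ker\mathfrak{a}\cap N_{\mathfrak{g}}(\ker\mathfrak{a})$ is used precisely to assert both the membership of $v_2$ in $\ker\mathfrak{a}$ (built into the setup) and the bracket-stability condition on $\ker\mathfrak{a}$.
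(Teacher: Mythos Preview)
Your proof is correct and follows essentially the same approach as the paper: both arguments observe that the sum of squares $\tfrac{1}{4}\sum_{j\geq 3}(\alpha^{g}_{2j1})^2$ vanishes iff each $\alpha^{g}_{2j1}=0$, i.e., iff $[v_2,v_j]\in\ker\mathfrak{a}$ for all $j$, which is the normalizer condition. Your version simply spells out in more detail (via the orthogonality of $v_1$ to $\ker\mathfrak{a}$ and the triviality of $j=2$) what the paper leaves implicit.
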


\begin{proof}
\begin{align}
\nonumber
\mathcal{K}_{\mathcal{B}}(\mathfrak{a}) = \frac{1}{4}\sum_{j=3}^n (\alpha^g_{2j1})^2 = 0
\end{align}
iff $[v_2,v_j] \subset \ker \mathfrak{a}$ for every $j=2, \ldots, n$.
Therefore $\mathcal{K}_{\mathcal{B}}(\mathfrak{a})=0$ iff $v_2 \in \ker \mathfrak{a} \cap N_\mathfrak{g}(\ker \mathfrak{a})$.
\end{proof}

\begin{remark}
\label{observacao depende so de v2}
Notice that the equality $\mathcal{K}_{\mathcal{B}}(\mathfrak{a}) = 0$ doesn't depend on $v_1$ or on the choice of $\mathcal{B}$ due to Proposition \ref{proposicao k fresco e algebrico}. 
It only depends on $\mathfrak{a}$ and $v_2$. 
From now on, we denote $\mathcal{K}_{\mathcal{B}}(\mathfrak{a})=0$ by $\mathcal{K}(\mathfrak{a},v_2) = 0$.
\end{remark}

\section{Uniqueness of Solutions of the Extended Geodesic Field for Lie groups}
\label{uniqueness}

For $p$-Finsler structures, the control $u\in \sfe$ that maximizes $\mathfrak a \in \mathfrak g^\ast$ is not necessarily unique, as it is in the strictly convex case. 
If $\mathfrak a(t)$ is the vertical part of a Pontryagin extremal, then the uniqueness of $u(t)$ such that $(u(t), \mathfrak{a}(t))$ is a Pontryagin extremal is important because $\mathfrak a(t)$ would be enough to define an extremal. 
This section is dedicated to finding relationships between this problem and the measure of 
\[
\mathcal{I} = \{ t \in I; \text{ there exist } v_2(t) \in \ker \mathfrak{a}(t)\backslash \{0\} \text{ such that } \mathcal{K}(\mathfrak{a}(t),\cdot)=0\}.
\] 

\begin{remark}
\label{observacao dual permanece na esfera}
	Notice that since $ r=\mathcal{M}(\mathfrak{a}(t))=\max_{u\in S_{F_e}}\mathfrak{a}(u)$ is constant due to Pontryagin Maximum Principle, then $\mathfrak{a}(t)\in S_{F_*}[0,r]\subset\mathfrak{g}^*$. 
\end{remark}

\begin{definition}
\label{definition infinitely many ut}
	A vertical part of a Pontryagin extremal $\mathfrak{a} (t)$ admits infinitely many $u(t)$ such that $(u(t), \mathfrak{a}(t))$ is a Pontryagin extremal if there are infinitely many admissible controls $u(t)\in\mathcal{C}(\mathfrak{a}(t))$ such that $\mathfrak{a}'(t)=-\ad^*(u(t))(\mathfrak{a}(t))$ for a.e. $t$. Likewise, we can define when $\mathfrak{a}(t)$ admits a unique solution $u(t)$ for $\mathfrak{a}^\prime (t) = -\ad^*(u(t))(\mathfrak{a}(t))$. Here we consider two controls $u(t)=\underline{u}(t)$ a.e. as the same.
\end{definition}

\begin{remark}
	\label{infinito dois}
	Notice that $\mathfrak{a}(t)$ admits infinitely many solutions $u(t)$ for $\mathfrak{a}'(t)=-\ad^*(u(t))\mathfrak{a}(t)$ iff $\mathfrak{a}(t)$ admits at least two different solutions for $\mathfrak{a}'(t)=-\ad^*(u(t))\mathfrak{a}(t)$. In fact, if there are two different admissible controls $u(t)$ and $\underline{u}(t)$ such that 
	\[
	\mathfrak{a}'(t)=-\ad^*(u(t))\mathfrak{a}(t)
	\] 
	and 
	\[
	\mathfrak{a}'(t)=-\ad^*(\underline{u}(t))\mathfrak{a}(t),
	\]   
	then $\tilde{u}(t)=su(t)+(1-s)\underline{u}(t)$ also maximizes $\mathfrak{a}(t)$ for every $s\in[0,1]$ and $\mathfrak{a}'(t)=-\ad^*(\widetilde{u}(t))\mathfrak{a}(t)$.
\end{remark}

\begin{remark}
\label{observacao nao tem a ver com extremal singular}
If $(u(t), \mathfrak{a}(t))$ and $(\underline u(t), \mathfrak{a}(t))$ are two different Pontryagin extremals, then the corresponding extremals $x_u(t)$ and $x_{\underline u}(t)$ in $G$ satisfying $x_u(t_0) = x_{\underline u}(t_0)$ and (\ref{equacao recupera trajetoria}) are different. 
Although we have multiple controls for a given $\mathfrak{a}(t)$, Definition \ref{definition infinitely many ut} doesn't have any direct relationship with singular extremals.
\end{remark}

Let us present this problem for $n=2$. 
Here $\mathfrak{g}$ can be the abelian Lie algebra or else the unique non-abelian two-dimensional Lie algebra.

If $\mathfrak{g}$ is the abelian algebra, then $\kappa_{g_k}$ is trivially zero.
If $(u(t), \mathfrak{a}(t))$ is a solution of (\ref{sistema na algebra}), then $\mathfrak{a}(t)= \mathfrak{a}_0$ for some $\mathfrak{a}_0 \in \mathfrak{g}^\ast \backslash \{0\}$.
It is straightforward that there are infinitely many solutions $(u(t),\mathfrak{a}_0)$ with vertical part $\mathfrak{a}_0$ iff $\mathfrak{a}_0$ admits infinitely many $u \in S_{F_e}$ that maximizes $\mathfrak{a}_0$, that is, $\mathfrak{a}_0$ is maximized by an edge in $S_{F_e}$.

If $\mathfrak{g}$ is the unique non-abelian two-dimensional Lie algebra, then choose a basis $\{e_1, e_2\}$ of $\mathfrak{g}$ such that $[e_1,e_2]=-e_1$.
Direct calculations shows that $\mathcal{K}(\mathfrak{a},v_2)=0$ iff $v_2$ is proportional to $e_1$, that is, $\mathfrak{a}$ is a non-zero multiple of $dx^2$.
In this case, $\mathfrak{a}^\prime(t) = 0$ and $\mathfrak{a}(t) = \mathfrak{a}$.
If $S_{F_e}$ admit an edge $E$ parallel to the $x$-axis that maximizes $\mathfrak{a}$, then any measurable control $u(t) \in E$ is such that $(u(t), \mathfrak{a})$ is a Pontryagin extremal
(compare with \cite{Gribanova}).
Therefore the two dimensional case shows evidences that given a vertical part of a Pontryagin extremal $\mathfrak{a}(t)$, the existence of infinitely many controls $u(t)$ such that $(u(t), \mathfrak{a}(t))$ is a Pontryagin extremal is related to the equation $\mathcal{K}(\mathfrak{a}(t),v_2(t)) = 0$.

From now on suppose that $n \geq 3$.
Let $\mathfrak{a}(t)$ be a vertical part of a Pontryagin extremal. 
Theorem \ref{multiplas solucoes 1} gives a necessary condition in order to $\mathfrak{a}(t)$ admit infinitely many $u(t)$ such that $(u(t),\mathfrak{a}(t))$ is a Pontryagin extremal and Theorem \ref{multiplas solucoes 2} gives a sufficient condition.

\begin{theorem}
\label{multiplas solucoes 1} Let $\mathfrak{a}(t) \in S_{F_\ast}$ be the vertical part of a Pontryagin extremal.
Suppose that $u(t)$ and $\tilde{u}(t)$ are different measurable functions such that $(u(t), \mathfrak{a}(t))$ and $(\tilde{u}(t), \mathfrak{a}(t))$ are Pontryagin extremals.
Then the set 
\begin{equation}
\nonumber 
\mathcal{I}:=\{t \in I; \text{ there exist } v_2(t)\in \ker \mathfrak{a}(t)\backslash \{0\} \text{ such that }\mathcal{K}(\mathfrak{a}(t),v_2(t)) = 0\}
\end{equation}
has positive measure.
\end{theorem}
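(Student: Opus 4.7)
The plan is to subtract the two representations $\mathfrak a^\prime(t)=-\ad^\ast(u(t))(\mathfrak a(t))$ and $\mathfrak a^\prime(t)=-\ad^\ast(\tilde u(t))(\mathfrak a(t))$ to extract a pointwise algebraic constraint on $w(t):=u(t)-\tilde u(t)$, and then to recognize that this constraint is exactly the one characterizing $\mathcal{K}(\mathfrak a,v_2)=0$ in Proposition \ref{proposicao k fresco e algebrico}.

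First I would note that for almost every $t\in I$ both equalities hold simultaneously, so linearity of $\ad^\ast$ gives $\ad^\ast(w(t))(\mathfrak a(t))=0$. Using the identity $\ad^\ast(u)(\mathfrak a)(v)=-\mathfrak a([u,v])$ employed in the proof of Theorem \ref{teorema equivalencia fibrado cotagente algebras}, this is equivalent to $\mathfrak a(t)([w(t),v])=0$ for every $v\in\mathfrak g$; that is, $[w(t),\mathfrak g]\subset\ker\mathfrak a(t)$, and in particular $w(t)\in N_{\mathfrak g}(\ker\mathfrak a(t))$.

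Next I would invoke that $u(t),\tilde u(t)\in\mathcal C(\mathfrak a(t))$, so both realize the common value $r=\mathcal M(\mathfrak a(t))$ of $\mathfrak a(t)$ on $S_{F_e}$. Consequently $\mathfrak a(t)(w(t))=0$, i.e.\ $w(t)\in\ker\mathfrak a(t)$. Combining the two steps, for almost every $t\in I$,
\[
w(t)\in\ker\mathfrak a(t)\cap N_{\mathfrak g}(\ker\mathfrak a(t)).
\]

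To finish, I would use Definition \ref{definition infinitely many ut}, which identifies controls coinciding almost everywhere; hence the hypothesis that $u$ and $\tilde u$ are different means the set $J:=\{t\in I:w(t)\neq 0\}$ has positive Lebesgue measure. For every $t\in J$, taking $v_2(t):=w(t)$ produces a nonzero element of $\ker\mathfrak a(t)\cap N_{\mathfrak g}(\ker\mathfrak a(t))$, so Proposition \ref{proposicao k fresco e algebrico} (together with Remark \ref{observacao depende so de v2}, which makes $\mathcal K(\mathfrak a(t),v_2(t))=0$ well defined) yields $\mathcal K(\mathfrak a(t),v_2(t))=0$. Thus $J\subset\mathcal I$ up to a null set, and $\mathcal I$ has positive measure. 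The proof is essentially a short algebraic reduction; the only point deserving some care is the sign/pairing convention for $\ad^\ast$ used to pass from $\ad^\ast(w)\mathfrak a=0$ to $[w,\mathfrak g]\subset\ker\mathfrak a$, and this follows immediately from the derivation already performed in the proof of Theorem \ref{teorema equivalencia fibrado cotagente algebras}.
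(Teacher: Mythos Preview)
Your proof is correct and follows essentially the same approach as the paper: subtract the two Euler--Arnold equations to obtain $\mathfrak a(t)([w(t),\cdot])=0$, observe $w(t)\in\ker\mathfrak a(t)$ because both controls maximize $\mathfrak a(t)$, and then invoke Proposition~\ref{proposicao k fresco e algebrico} on the positive-measure set where $w(t)\neq 0$. Your write-up is in fact slightly more careful than the paper's in spelling out why $w(t)\in\ker\mathfrak a(t)$ and in tracking the ``almost everywhere'' qualification.
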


\begin{proof}
Denote 
\[
\mathcal{I}^\prime = \{t\in I;u(t) - \tilde u(t) \neq 0\},
\]
which has positive measure by hypothesis.
Notice that 
\begin{equation}
\mathfrak{a}(t) ([u(t) - \tilde u(t), \cdot]) = \mathfrak{a}^\prime (t) - \mathfrak{a}^\prime (t) = 0,
\end{equation}
which implies that
$u(t) - \tilde{u}(t) \in \ker \mathfrak{a}(t) \cap N_{\mathfrak{g}}(\mathfrak{a}(t))$ and $\mathcal{K}(\mathfrak{a}(t), u(t) -\tilde{u}(t))=0$ for every $t\in I$ due to Proposition \ref{proposicao k fresco e algebrico}.
Moreover $v_2(t) : = u(t) - \tilde u(t) \neq 0$ for every $t \in \mathcal{I}^\prime$.
Therefore $\mathcal{I} \supset \mathcal{I}^\prime $ also has positive measure.
\end{proof}

\begin{theorem}
\label{multiplas solucoes 2}
Let $L$ be a $k$-dimensional face of $S_{F_e}$ which maximizes the relative interior of the ($n-k-1$)-dimensional face $L^\ast$ in $S_{F_\ast}$ (see Remark \ref{remark face que maximiza funcional}).
Let $(u(t), \mathfrak{a}(t))$ be a Pontryagin extremal of $(G,F)$ such that $u(t) \in L$ and $\mathfrak{a}(t) \in L^\ast$ for every $t \in I$.
Suppose that there exist a measurable function $w(t)$ parallel to $L$ such that $u(t) + w(t) \in L$ and a set of positive measure $\mathcal{I} \subset I$ such that for every $t\in \mathcal{I}$, 
\begin{itemize}
\item $w(t)\neq 0$;
\item $\mathcal{K}(\mathfrak{a}(t),w(t)) = 0$.
\end{itemize}
Then $(u(t)+w(t), \mathfrak{a}(t))$ is also a Pontryagin extremal of $(G,F)$ and $\mathfrak{a}(t)$ admits infinitely many controls $\tilde{u}(t)$ such that $(\tilde{u}(t), \mathfrak{a}(t))$ is a Pontryagin extremal.  
\end{theorem}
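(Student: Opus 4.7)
The plan is to check that $(u(t)+w(t), \mathfrak{a}(t))$ satisfies the two defining conditions of a Pontryagin extremal from Theorem \ref{teorema equivalencia fibrado cotagente algebras}: the maximization $u(t)+w(t) \in \mathcal{C}(\mathfrak{a}(t))$ and the Euler-Arnold equation $\mathfrak{a}'(t) = -\ad^*(u(t)+w(t))(\mathfrak{a}(t))$. The first condition is immediate from the duality $\Psi$ of Remark \ref{obsevacao correspondencia reversa}: since $u(t)+w(t) \in L = \Psi^{-1}(L^\ast)$ and $\mathfrak{a}(t) \in L^\ast$, we have $\mathfrak{a}(t)(u(t)+w(t)) = 1 = \max_{v \in S_{F_e}} \mathfrak{a}(t)(v)$. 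Since $\mathfrak{a}'(t) = -\ad^*(u(t))(\mathfrak{a}(t))$ already holds by hypothesis, the second reduces to proving $\ad^*(w(t))(\mathfrak{a}(t)) = 0$, equivalently $[w(t), \mathfrak{g}] \subset \ker \mathfrak{a}(t)$, for a.e. $t \in \mathcal{I}$ (on $I \setminus \mathcal{I}$ we may take $w(t) = 0$, so that $(u(t)+w(t), \mathfrak{a}(t))$ reduces to the original extremal).

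The key observation is that the hypothesis $\mathfrak{a}(t) \in L^\ast$ for \emph{every} $t \in I$ is a strong global constraint that supplies the missing information. Since $\mathfrak{a}(t)$ is absolutely continuous and remains in the affine subspace $\aff L^\ast$, whose parallel linear subspace is $W := \{\xi \in \mathfrak{g}^\ast; \xi|_{\spann L} = 0\}$, we obtain $\mathfrak{a}'(t) \in W$ almost everywhere. Since $w(t)$ is parallel to $L$, in particular $w(t) \in \spann L$, so $\mathfrak{a}'(t)(w(t)) = 0$. Evaluating the Euler-Arnold equation for $u(t)$ at $w(t)$ yields $\mathfrak{a}'(t)(w(t)) = \mathfrak{a}(t)([u(t), w(t)])$, and hence by antisymmetry $[w(t), u(t)] \in \ker \mathfrak{a}(t)$.

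On $\mathcal{I}$, Proposition \ref{proposicao k fresco e algebrico} converts $\mathcal{K}(\mathfrak{a}(t), w(t)) = 0$ into the normalizer condition $w(t) \in N_\mathfrak{g}(\ker \mathfrak{a}(t))$, i.e., $[w(t), \ker \mathfrak{a}(t)] \subset \ker \mathfrak{a}(t)$. Combining this with the previous paragraph via the splitting $\mathfrak{g} = \mathbb{R}\,u(t) \oplus \ker \mathfrak{a}(t)$ (valid because $\mathfrak{a}(t)(u(t))=1$), every $X \in \mathfrak{g}$ decomposes as $X = \mathfrak{a}(t)(X)\,u(t) + Y$ with $Y \in \ker \mathfrak{a}(t)$, and both summands of $[w(t), X] = \mathfrak{a}(t)(X)\,[w(t), u(t)] + [w(t), Y]$ lie in $\ker \mathfrak{a}(t)$. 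Thus $\ad^*(w(t))(\mathfrak{a}(t)) = 0$, which completes the verification that $(u(t)+w(t), \mathfrak{a}(t))$ is a Pontryagin extremal. Infinitely many controls then follow from Remark \ref{infinito dois}, since $u(t)$ and $u(t)+w(t)$ differ on $\mathcal{I}$, a set of positive measure.

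The main conceptual obstacle is that the local hypothesis $\mathcal{K}(\mathfrak{a}(t), w(t)) = 0$ only yields the normalizer condition, which is strictly weaker than the annihilator condition $[w(t), \mathfrak{g}] \subset \ker \mathfrak{a}(t)$ needed for $\ad^*(w(t))(\mathfrak{a}(t)) = 0$ --- simple examples in $\mathfrak{sl}(2,\mathbb{R})$ already exhibit elements lying in $\ker \mathfrak{a} \cap N_\mathfrak{g}(\ker \mathfrak{a})$ that do not annihilate $\mathfrak{a}$. Bridging this gap is precisely where the global constraint $\mathfrak{a}(t) \in L^\ast$ for all $t$ enters: it forces $[w(t), u(t)] \in \ker \mathfrak{a}(t)$, the exact extra ingredient needed to upgrade the normalizer condition to the annihilator condition.
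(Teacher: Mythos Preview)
Your proof is correct and follows essentially the same approach as the paper's: both decompose $\mathfrak{g} = \mathbb{R}\,u(t) \oplus \ker \mathfrak{a}(t)$, use the constraint $\mathfrak{a}(t)\in L^\ast$ to obtain $\mathfrak{a}'(t)(w(t))=0$ (equivalently $\mathfrak{a}(t)([u(t),w(t)])=0$) for the first summand, invoke Proposition~\ref{proposicao k fresco e algebrico} for the second, and finish with Remark~\ref{infinito dois}. Your explicit identification of the linear space $W$ parallel to $\aff L^\ast$ and your remark on why the normalizer condition alone is insufficient are helpful clarifications not spelled out in the paper, but the underlying argument is the same.
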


\begin{proof}
We claim that $\mathfrak{a}(t)([w(t),\cdot]) = 0$.
In fact, for each $t\in I$, an arbitrary $v(t) \in \mathfrak{g}$ can be written as $a(t)u(t) + b(t) z(t)$, where $a(t), b(t) \in \mathbb R$ and $z(t) \in \ker \mathfrak{a}(t)$.
Then
\begin{align}
\mathfrak{a}(t)([w(t), v(t)]) & = -a(t).\mathfrak{a}(t)([u(t),w(t)]) + b(t). \mathfrak{a}(t)([w(t),z(t)]) \nonumber \\
& = - a(t).\mathfrak{a}^\prime(t)(w(t)) + b(t).\mathfrak{a}(t)([w(t),z(t)]).\label{equacao awv}
\end{align}
The first term of the right-hand-side of (\ref{equacao awv}) is zero because $\mathfrak{a}^\prime(t)$ is parallel to $L^\ast$, $w(t)$ is parallel to $L$ and $\mathfrak{b}(\tilde{v})=1$ for every $\mathfrak{b} \in L^\ast$ and $\tilde{v} \in L$.
The second term of the right-hand-side of (\ref{equacao awv}) is zero because 
$\mathcal{K}(\mathfrak{a}(t),w(t)) = 0$, which is equivalent to the condition $w(t) \in \ker \mathfrak{a}(t) \cap N_{\mathfrak{g}}(\ker \mathfrak{a})$ due to Proposition \ref{proposicao k fresco e algebrico}.
Then $[w(t),z(t)] \in \ker \mathfrak{a}$ and the claim holds.
Therefore
\begin{equation} 
\nonumber
\mathfrak{a}^\prime (t) = \mathfrak{a}(t)([u(t), \cdot]) = \mathfrak{a}(t)([u(t)+w(t),\cdot]),
\end{equation}
and $(u(t)+w(t),\mathfrak{a}(t))$ is also a Pontryagin extremal.
Finally $\mathfrak{a}(t)$ admits infinitely many $\tilde{u}(t)$ such that $(\tilde{u}(t), \mathfrak{a}(t))$ is a Pontryagin extremal due to Remark \ref{infinito dois}.
\end{proof}

From now on, we restrict our study to the case of three-dimensional Lie groups endowed with a left invariant $p$-Finsler structure $F$. We call two-dimensional faces of $S_F \subset \mathfrak{g}$ (or else of $S_{F_\ast} \subset \mathfrak{g}^\ast$) simply by face, the one-dimensional faces by edges and the zero-dimensional faces by vertices.

\begin{proposition}
	\label{proposition ka zero kera subalgebra}
	Let $G$ be a three-dimensional Lie group endowed with a $p$-Finsler structure $F$ and $\mathfrak{a} \in S_{F_\ast}$.
	The following statements are equivalent:
	\begin{enumerate}
		\item $\mathcal K(\mathfrak{a},v_2) = 0$ for some $v_2 \in \ker \mathfrak{a}$;
		\item $\ker \mathfrak a$ is a subalgebra of $\mathfrak g$;
		\item $\mathcal K(\mathfrak{a},v) = 0$ for every $v \in \ker \mathfrak{a}$;
		\item $[v,\ker \mathfrak{a}] \subset \ker \mathfrak{a}$ for every $v \in \ker \mathfrak{a}$.
		\item There exists $v \in \ker \mathfrak{a}\backslash \{0\}$ such that $[v,\ker \mathfrak a] \subset \ker \mathfrak{a}$;
	\end{enumerate}
\end{proposition}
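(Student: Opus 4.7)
The plan is to exploit the algebraic reformulation supplied by Proposition \ref{proposicao k fresco e algebrico}, which reduces $\mathcal K(\mathfrak a,\cdot)=0$ to a normalizer condition, and then use the fact that $\ker\mathfrak a$ is $2$-dimensional (since $\dim\mathfrak g=3$ and $\mathfrak a\neq 0$) to upgrade a single normalizing vector to the full subalgebra property. I will set up the cycle
\[
(2)\Longrightarrow(4)\Longleftrightarrow(3),\qquad (4)\Longrightarrow(5)\Longleftrightarrow(1),\qquad (5)\Longrightarrow(2),
\]
so that the only substantive step is the final implication $(5)\Rightarrow(2)$.

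First I would observe that the equivalences $(1)\Leftrightarrow(5)$ and $(3)\Leftrightarrow(4)$ are immediate translations of Proposition \ref{proposicao k fresco e algebrico}, since $\mathcal K(\mathfrak a,v)=0$ is by that proposition equivalent to $v\in\ker\mathfrak a\cap N_{\mathfrak g}(\ker\mathfrak a)$, i.e. $[v,\ker\mathfrak a]\subset\ker\mathfrak a$; here we use that in statements (1) and (5) the vector in question is nonzero (a transverse edge must be nontrivial). Next, $(2)\Rightarrow(4)$ is just the definition of a subalgebra applied to any $v\in\ker\mathfrak a$, and $(4)\Rightarrow(5)$ is trivial once one picks any nonzero element of the $2$-dimensional space $\ker\mathfrak a$.

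The main point of the proof is $(5)\Rightarrow(2)$, and this is where the three-dimensionality is essential. Given a nonzero $v\in\ker\mathfrak a$ with $[v,\ker\mathfrak a]\subset\ker\mathfrak a$, I would extend $\{v\}$ to a basis $\{v,w\}$ of the $2$-dimensional subspace $\ker\mathfrak a$. For arbitrary elements $av+bw,\,cv+dw\in\ker\mathfrak a$ one computes
\[
[av+bw,\,cv+dw]=(ad-bc)\,[v,w],
\]
and by hypothesis $[v,w]\in\ker\mathfrak a$. Hence every bracket of elements of $\ker\mathfrak a$ lies in $\ker\mathfrak a$, giving (2). Finally, to close the cycle I note $(2)\Rightarrow(4)$ was already established, so the five statements are mutually equivalent.

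The only step with any real content is the dimensional collapse in $(5)\Rightarrow(2)$; it fails in higher dimensions, which is exactly why this proposition is stated for three-dimensional Lie groups and why Remark \ref{caso dimensao 3 k fresco nao depende v2} can assert that $\mathcal K(\mathfrak a,v_2)$ does not depend on $v_2$ in this setting. I do not foresee any genuine obstacle — the proof is essentially a bookkeeping of implications around Proposition \ref{proposicao k fresco e algebrico}, with one two-line argument using $\dim\ker\mathfrak a=2$.
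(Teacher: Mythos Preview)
Your proof is correct and follows essentially the same route as the paper. The only cosmetic difference is organizational: the paper recomputes $\mathcal K(\mathfrak a,v_2)=\tfrac14(\alpha^g_{231})^2$ in a chosen basis to obtain $(1)\Leftrightarrow(2)$ directly, whereas you invoke Proposition~\ref{proposicao k fresco e algebrico} to translate $(1)$ and $(3)$ into the normalizer conditions $(5)$ and $(4)$; the substantive step in both arguments---using $\dim\ker\mathfrak a=2$ to pass from a single normalizing vector to the subalgebra property---is identical.
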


\begin{proof}
First of all we choose a basis $\mathcal{B}=(v_1, v_2, v_3)$ such that $v_1$ maximizes $\mathfrak{a}$ on $\sfe$ and $\ker \mathfrak{a} = \spann \{v_2, v_3\}$.
Then
\begin{equation}
\nonumber \mathcal{K}(\mathfrak{a},v_2) = \frac{1}{4}\left( \alpha^g_{231} \right)^2,
\end{equation}
where
\begin{equation}
\nonumber [v_2,v_3] = \alpha^g_{231}v_1 + \alpha^g_{232}v_2 + \alpha^g_{233}v_3
\end{equation}
and Item 1 is equivalent to Item 2. 
The equivalence between Item 2 and Item 3 follows from the fact that Item 2 doesn't depend on $v_2 \in \ker \mathfrak{a}$.
The equivalence between Item 2 and Item 4 is straightforward.
Item 4 $\Rightarrow$ Item 5 is trivial.
Finally, if Item 5 holds, consider $w \in \ker \mathfrak{a}$ such that $\{w,v\}$ is a basis of $\ker\mathfrak{a}$. 
We have that $[w,v] \in \ker \mathfrak{a}$ by Item 5, which implies that $\ker \mathfrak{a}$ is a subalgebra of $\mathfrak{g}$ and consequently Item 4 holds.
\end{proof}

\begin{remark}
\label{caso dimensao 3 k fresco nao depende v2}
Proposition \ref{proposition ka zero kera subalgebra} states that the equality $\mathcal K({\mathfrak{a}},v_2)=0$ doesn't depend on $v_2 \in \ker \mathfrak{a}$.
Therefore in the three-dimensional case we can simply denote $\mathcal{K}(\mathfrak{a}) = \mathcal{K}(\mathfrak{a},v_2)$.
\end{remark}

\begin{remark}
	\label{at em int face}
	If $(u(t),\mathfrak{a}(t))$ is a Pontryagin extremal and $t_0$ is such that $\mathfrak{a}(t_0)$ lies in the relative interior of a face of $S_{F_\ast}$, then there exists $\varepsilon >0$ such that $\mathcal{C}(\mathfrak{a}(t))= \{u_0\}$ is a vertex of $S_F$ for $t \in (t_0 - \varepsilon, t_0 + \varepsilon)$. 
	Therefore the problem of uniqueness of $u(t)$ is trivial in this case. We will suppose from here that $\mathfrak{a}(t)$ lies on an edge of $S_{F_\ast}$.
\end{remark}

\begin{theorem}
	\label{teorema principal}
	Let $G$ be a three dimensional connected Lie group endowed with a left invariant polyhedral Finsler structure $F$.
	Let $L$ be an edge of $S_{F_e}$ that maximizes the relative interior of the edge $L^\ast$ of $S_{F_\ast}$.
	Let $\mathfrak a: I \rightarrow L^\ast$ be the vertical part of a Pontryagin extremal $(u(t), \mathfrak{a}(t))$, where $u(t) \in L$. 
	Then $\mathfrak a(t)$ admits infinitely many $\tilde u(t) \in L$ such that $(\tilde u(t), \mathfrak a(t))$ is a Pontryagin extremal iff $\mathcal I: = \{t \in I; \mathcal K (\mathfrak{ a}(t)) = 0\}$ has positive measure.
\end{theorem}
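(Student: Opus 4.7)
The plan is to prove both implications by appealing to Theorems \ref{multiplas solucoes 1} and \ref{multiplas solucoes 2}, exploiting the three-dimensional simplification in Remark \ref{caso dimensao 3 k fresco nao depende v2}, which says that $\mathcal{K}(\mathfrak{a}(t),v_2)$ does not depend on $v_2 \in \ker\mathfrak{a}(t)\backslash\{0\}$ and equals $\mathcal{K}(\mathfrak{a}(t))$.

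For the forward direction ($\Rightarrow$), I would start from the hypothesis that $\mathfrak{a}(t)$ admits infinitely many controls $\tilde u(t)$ yielding Pontryagin extremals. By Remark \ref{infinito dois}, this produces two distinct measurable controls $u(t)$ and $\tilde u(t)$ with $(u(t),\mathfrak{a}(t))$ and $(\tilde u(t),\mathfrak{a}(t))$ both Pontryagin extremals. Theorem \ref{multiplas solucoes 1} then delivers a set of positive measure on which there exists some $v_2(t) \in \ker\mathfrak{a}(t)\backslash\{0\}$ with $\mathcal{K}(\mathfrak{a}(t),v_2(t))=0$. In dimension three, Proposition \ref{proposition ka zero kera subalgebra} shows this condition depends only on $\mathfrak{a}(t)$, so this set is exactly $\mathcal{I}$.

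For the reverse direction ($\Leftarrow$), the idea is to feed Theorem \ref{multiplas solucoes 2} with an explicit $w(t)$. Since $L$ is an edge of $S_{F_e}$, write $L$ as the segment between its vertices $p_1,p_2$, and express $u(t) = p_1 + s(t)(p_2-p_1)$ with $s:I\to[0,1]$ measurable. Define $w(t) = \lambda(t)(p_2-p_1)$ with $\lambda(t)$ a measurable scalar that is zero off $\mathcal{I}$, nonzero on $\mathcal{I}$, and chosen so that $s(t)+\lambda(t) \in [0,1]$, for example by shifting toward the midpoint of the segment. Because $\mathfrak{a}(t)\in L^\ast$ forces $\mathfrak{a}(t)(v)=1$ for every $v\in L$, the direction $p_2-p_1$ lies in $\ker\mathfrak{a}(t)$, so $w(t)\in\ker\mathfrak{a}(t)$ throughout; on $\mathcal{I}$ we moreover have $w(t)\neq 0$ and $\mathcal{K}(\mathfrak{a}(t),w(t))=0$ by Proposition \ref{proposition ka zero kera subalgebra}. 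Theorem \ref{multiplas solucoes 2} then produces the Pontryagin extremal $(u(t)+w(t),\mathfrak{a}(t))$, which differs from $(u(t),\mathfrak{a}(t))$ on a set of positive measure, and Remark \ref{infinito dois} upgrades two distinct controls to infinitely many.

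The main obstacle is not conceptual but lies in the explicit measurable construction of $w(t)$ and in verifying that every hypothesis of Theorem \ref{multiplas solucoes 2} is satisfied; the key geometric fact that lubricates the argument, and which is peculiar to dimension three with $L$ a $1$-dimensional face, is that the whole direction of $L$ automatically sits inside $\ker\mathfrak{a}(t)$, so no additional transversality or normalizer computation is needed beyond invoking Proposition \ref{proposition ka zero kera subalgebra}.
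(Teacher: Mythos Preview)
Your proposal is correct and follows essentially the same route as the paper: both directions are reduced to Theorems \ref{multiplas solucoes 1} and \ref{multiplas solucoes 2}, with the three-dimensional simplification from Proposition \ref{proposition ka zero kera subalgebra} used to identify $\mathcal I$ with the set produced by Theorem \ref{multiplas solucoes 1} and to guarantee $\mathcal K(\mathfrak a(t),w(t))=0$ on $\mathcal I$ for any nonzero $w(t)$ parallel to $L$. The paper's explicit construction of $\lambda(t)$ (taking $\lambda(t)=\pm r_L$ according to which half of $L$ contains $u(t)$) is a concrete instance of your ``shift toward the midpoint'' idea, so the arguments coincide.
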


\begin{proof}
If $\mathfrak{a}(t)$ admits infinitely many $u(t) \in L$ such that $(u(t), \mathfrak{a}(t))$ is a Pontryagin extremal, then $\mathcal{I}$ has positive measure due to Theorem \ref{multiplas solucoes 1}.
{Reciprocally, let $w \in \mathfrak g$ be a non-zero vector parallel to $L$. 
Recall that $\mathcal{I} = \{t \in I; \mathcal K (\mathfrak{ a}(t), w) = 0\}$ due to Proposition \ref{proposition ka zero kera subalgebra}. If $\mathcal{I}$ has positive measure, then consider  a measurable function $\lambda: I \rightarrow \mathbb R$ such that:
	\begin{itemize}
		\item $\lambda (t) \neq 0$ for $t \in \mathcal I$;
		\item $\lambda(t) = 0$ for $t \not \in \mathcal I$;
		\item $u(t)+\lambda(t).w \in L$.
	\end{itemize}  
	For instance, if $\bar u$ is the midpoint of $L$ and $r_L>0$ is such that $L$ is the line segment $[ \bar u -r_L . w, \bar u + r_L.w]$, then we can define $\lambda(t)$ as 
	\[
	\lambda(t) = 
	\left\{
	\begin{array}{ccc}
	0 & \text{ if }& t \not\in \mathcal{I}; \\
	r_L & \text{ if }& u(t) \in \mathcal{I} \cap [\bar u - r_L.w,\bar u]; \\
	-r_L & \text{ if } & u(t) \in \mathcal{I} \cap [\bar u, \bar u+r_L.w].
	\end{array}
	\right.
	\] 
	Therefore $u_\lambda(t) = u(t) + \lambda(t).w$ is another control such that $(u_\lambda(t), \mathfrak a(t))$ is a Pontryagin extremal due to Theorem \ref{multiplas solucoes 2} and there are infinitely many controls $\tilde u(t) \in L$ such that $(\tilde u(t),\mathfrak a(t))$ is a Pontryagin extremal due to Remark \ref{infinito dois}. }
\end{proof}

\begin{theorem}
	\label{teorema principal 2}
	Let $G$ be a three dimensional connected Lie group endowed with a left invariant polyhedral Finsler structure $F$.
	Suppose that $\mathfrak{a}_0$ is a vertex of $S_{F_\ast}$ and that the constant map $\mathfrak a(t) = \mathfrak{a}_0$ is the vertical part of a Pontryagin extremal of $(G,F)$. 
	Then $\mathfrak{a}_0$ admits infinitely many $u(t)$ such that $(u(t), \mathfrak{a}_0 )$ is a Pontryagin extremal iff $\mathcal K(\mathfrak{a}_0)=0$.
\end{theorem}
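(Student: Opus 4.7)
The plan is to first unpack the geometric setup through the face--duality of Remark \ref{obsevacao correspondencia reversa}. Since $\mathfrak{a}_0$ is a vertex of $S_{F_\ast}$ and $n=3$, the dual face $L:=\Psi^{-1}(\{\mathfrak{a}_0\})$ is a two-dimensional facet of $S_{F_e}$, and by Remark \ref{remark face que maximiza funcional} it is precisely the set of maximizers of $\mathfrak{a}_0$ on $S_{F_e}$. Because $\mathfrak{a}(t)\equiv\mathfrak{a}_0$, the equation $\mathfrak{a}'(t)=-\ad^*(u(t))(\mathfrak{a}(t))$ collapses to the algebraic condition $\mathfrak{a}_0([u(t),\cdot])=0$, so that $(u(t),\mathfrak{a}_0)$ is a Pontryagin extremal exactly when $u(t)\in L$ and $u(t)\in W:=\{u\in\mathfrak{g}:\mathfrak{a}_0([u,\cdot])=0\}$ almost everywhere. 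This puts the problem in the framework of Theorems \ref{multiplas solucoes 1} and \ref{multiplas solucoes 2} with $\dim L=2=n-1$ and the dual face $L^\ast=\{\mathfrak{a}_0\}$ of dimension $n-k-1=0$.

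For the direction ($\Rightarrow$), by Remark \ref{infinito dois} I may assume two distinct admissible controls $u(t),\tilde u(t)$. Theorem \ref{multiplas solucoes 1} then forces the set $\mathcal I=\{t\in I:\exists\, v_2(t)\in\ker\mathfrak{a}_0\setminus\{0\},\ \mathcal K(\mathfrak{a}_0,v_2(t))=0\}$ to have positive measure. In dimension three, Proposition \ref{proposition ka zero kera subalgebra} (equivalence of Items 2 and 5) identifies ``there exists a non-zero $v_2\in\ker\mathfrak{a}_0$ with $\mathcal K(\mathfrak{a}_0,v_2)=0$'' with ``$\ker\mathfrak{a}_0$ is a subalgebra of $\mathfrak{g}$'', which is a property of $\mathfrak{a}_0$ alone. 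Hence $\mathcal I$ is either all of $I$ or empty; the positive-measure hypothesis selects the former, and by Remark \ref{caso dimensao 3 k fresco nao depende v2} this is exactly $\mathcal K(\mathfrak{a}_0)=0$.

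For the direction ($\Leftarrow$), the hypothesis yields a measurable $u(t)\in L\cap W$, from which I pick a single value $u_0\in L\cap W$. Since $L$ is two-dimensional I may select another $u_1\in L$ with $u_1\neq u_0$ and set $w:=u_1-u_0\neq 0$; this vector is parallel to $L$ and lies in $\ker\mathfrak{a}_0$. Applied to the constants $u(t)\equiv u_0$ and $w(t)\equiv w$, Theorem \ref{multiplas solucoes 2} produces the Pontryagin extremal $(u_1,\mathfrak{a}_0)$: indeed $u_0+w=u_1\in L$ for every $t$, $w$ is non-zero, and $\mathcal K(\mathfrak{a}_0,w)=0$ follows from $\mathcal K(\mathfrak{a}_0)=0$ via Proposition \ref{proposition ka zero kera subalgebra} (Items 2 and 3 are equivalent). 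With $(u_0,\mathfrak{a}_0)$ and $(u_1,\mathfrak{a}_0)$ distinct, Remark \ref{infinito dois} upgrades this to infinitely many admissible controls. The main obstacle is conceptual rather than computational: one must recognise that in dimension three the vanishing $\mathcal K(\mathfrak{a}_0)=0$ is an intrinsic property of $\ker\mathfrak{a}_0$ that can be read off from any non-zero transverse direction, which is precisely the content of Proposition \ref{proposition ka zero kera subalgebra} together with Remark \ref{caso dimensao 3 k fresco nao depende v2}; once this identification is in place, both implications reduce to straightforward invocations of the general Theorems \ref{multiplas solucoes 1} and \ref{multiplas solucoes 2}.
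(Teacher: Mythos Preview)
Your proof is correct and follows essentially the same approach as the paper's: both directions are reduced to Theorems \ref{multiplas solucoes 1} and \ref{multiplas solucoes 2}, using Proposition \ref{proposition ka zero kera subalgebra} and Remark \ref{caso dimensao 3 k fresco nao depende v2} to convert the positive-measure conclusion of Theorem \ref{multiplas solucoes 1} into the pointwise statement $\mathcal K(\mathfrak a_0)=0$. The only cosmetic difference is that in the converse you fix constant controls $u_0,u_1\in L$ and a constant perturbation $w=u_1-u_0$, whereas the paper applies Theorem \ref{multiplas solucoes 2} with $w(t)=\tilde u(t)-u(t)$ for an arbitrary measurable $\tilde u(t)\in L$; both then finish via Remark \ref{infinito dois}.
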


\begin{proof}
Suppose that $\mathfrak{a}_0$ is maximized by the face $L = \Psi^{-1}(\mathfrak{a})\subset S_{F_e}$.
If $(u(t),\mathfrak{a}_0)$ is a Pontryagin extremal of $(G,F)$, then we have necessarily that $u(t) \in L$. 
If $\mathfrak{a}(t) = \mathfrak{a}_0$ admits infinitely many $u(t)$ such that $(u(t), \mathfrak{a}(t))$ is a Pontryagin extremal, then $\mathcal{K}(\mathfrak{a}_0)=0$ due to Theorem \ref{multiplas solucoes 1}. 

Reciprocally, if $\mathcal{K}(\mathfrak{a}_0)=0$, then any measurable function $\tilde u(t) \in L$ such that $\tilde{u}(t) - u(t) \neq 0$ can be written as $\tilde{u}(t) = u(t) + (\tilde{u}(t) - u(t))$, where $w(t)=\tilde{u}(t) - u(t)$ satisfies all conditions for $w(t)$ of Theorem \ref{multiplas solucoes 2} for $\mathcal{I}=I$.
Thus $\mathfrak{a}_0$ admits infinitely many controls $\tilde u(t)$ such that $(\tilde u(t), \mathfrak{a}_0)$ is a Pontryagin extremal.
\end{proof}

\section{Final remarks}
\label{final-remarks}

In this section, we make remarks about this work and give suggestions for future works.

Theorems \ref{teorema principal} and \ref{teorema principal 2} can be used in order to try to classify the left invariant $p$-Finsler structures on three-dimensional Lie groups such that for every vertical part $\mathfrak{a}(t)$ of a Pontryagin extremal, there exist a unique $u(t)$ such that $(u(t), \mathfrak{a}(t))$ is a Pontryagin extremal.

\label{paragrafo explica ponto conjugado} In Finsler geometry, if we consider a point $p \in M^n$ and a geodesic $\gamma$ such that $\gamma(0)=p$, then the first conjugate point on $\gamma$ can be estimated by the flag curvatures along $\gamma$. 
Moreover the geodesic is no longer minimizing beyond this conjugate point. 
In fact, Ricci curvature can be defined for Finsler manifolds in a similar way that it is defined for Riemannian manifolds, that is, as a sum of $(n-1)$ flag curvatures (see \cite[Section 7.6]{BaoChernShen}).
If $(n-1)\rho$ is a positive lower bound  for the Ricci curvature, then the distance of two conjugate points in $M$ is less than or equal to $\pi / \sqrt{\rho} $ (see \cite[Section 7.7]{BaoChernShen}).
Eventually such an analysis can be tried on Lie groups with a left invariant $p$-Finsler structure with an approximation of $F_e$ by Riemannian metrics.
Another question is whether the property of $\ker \mathfrak{a}$ being a subalgebra in higher dimensions has something to do with geodesic properties of $G$. 
\label{paragrafo geodesic properties} For instance, if $(u(t), \mathfrak{a}(t))$ is a Pontryagin extremal of $G$ such that $\ker \mathfrak{a}(t)$ is a subalgebra of $\mathfrak{g}$ for every $t$ and $x(t)$ is the extremal in $G$ correspondent to $(u(t), \mathfrak{a}(t))$, does it implies that $x(t)$ is a geodesic?.

Finally we hope that Theorems \ref{teorema principal} and \ref{teorema principal 2} would help us to understand and eventually classify the dynamics of the control system $\mathfrak a^\prime(t) = -\ad^\ast (u(t))(\mathfrak a(t))$ on three-dimensional Lie groups endowed with a left invariant $p$-Finsler structure.
				
\section{Acknowledgements}
				
The authors would like to thank Professors Alexandre José Santana, Claudio Aguinaldo Buzzi, Hugo Murilo Rodrigues, Lino Grama and Maria Elenice Rodrigues Hernandes for their suggestions. 
The authors also would like to thank the suggestions given by reviewers that were very helpful for this work.
The first author would like to thank the Ph.D. fellowship from the Brazilian National Council for Scientific and Technological Development, CNPq, Brazil.

\end{document}